\newcommand{\lbvarphi}{\underline{\varphi}}
\newcommand{\ubvarphi}{\overline{\varphi}}
\newcommand{\lblambda}{\underline{\lambda}}
\newcommand{\ublambda}{\overline{\lambda}}
\newcommand{\kcivarphi}{k_{ij}}
\newcommand{\kcilambda}{l_{ij}}
\newcommand{\fspace}{\;\;}
\def\argmin{\mathop{\rm argmin}}
\def\ri{\mathop{\rm ri}}
\newtheorem{observation}[theorem]{Observation}
\newtheorem{consequence}[theorem]{Consequence}
\newtheorem{fact}[theorem]{Fact}
\newcommand{\iverson}[1]{\llbracket#1\rrbracket}
\begin{document}
\title{A Class of Linear Programs Solvable by Coordinate-Wise Minimization\thanks{This is a pre-print of the paper {\textit{Dlask, T., Werner, T.: A class of linear programs solvable by coordinate-wise minimization. In: Kotsireas, I.S., Pardalos, P.M. (eds.) LION 2020. LNCS, vol. 12096, pp. 52-67. Springer, Cham (2020).}} The final authenticated version is available online at https://doi.org/10.1007/978-3-030-53552-0\_8. \hfill \protect\\  \protect\\ This work has been supported by the Grant Agency of the Czech Technical University in Prague (grant SGS19/170/OHK3/3T/13), the OP VVV project CZ.02.1.01/0.0/0.0/16\textunderscore019/0000765, and the Czech Science Foundation (grant 19-09967S).}}
%
%
\author{Tom\'{a}\v{s} Dlask\orcidID{0000-0002-1944-6569} \and
Tom\'{a}\v{s} Werner\orcidID{0000-0002-6161-7157}}
\authorrunning{T. Dlask and T. Werner}
%
\institute{Faculty of Electrical Engineering, Czech Technical University in Prague\\
\email{dlaskto2@fel.cvut.cz}}
\maketitle              
\begin{abstract}
Coordinate-wise minimization is a simple popular method for
large-scale optimization. Unfortunately, for general
(non-differentiable) convex problems it may not find global minima. We
present a class of linear programs that coordinate-wise minimization
solves exactly. We show that dual LP relaxations of several well-known
combinatorial optimization problems are in this class and the method
finds a global minimum with sufficient accuracy in reasonable
runtimes. Moreover, for extensions of these problems that no longer are
in this class the method yields reasonably good suboptima. Though the
presented LP relaxations can be solved by more efficient methods (such
as max-flow), our results are theoretically non-trivial and can lead
to new large-scale optimization algorithms in the future.

\keywords{Coordinate-wise minimization  \and Linear programming \and LP relaxation}
\end{abstract}
\section{Introduction}

{\em Coordinate-wise minimization\/}, or {\em coordinate descent\/},
is an iterative optimization method, which in every iteration
optimizes only over a single chosen variable while keeping the
remaining variables fixed. Due its simplicity, this method is popular
among practitioners in large-scale optimization in areas such as
machine learning or computer vision, see e.g.~\cite{Wright15}.  A
natural extension of the method is {\em block-coordinate
minimization\/}, where every iteration minimizes the objective over a
block of variables. In this paper, we focus on coordinate minimization
with exact updates, where in each iteration a global minimum over the
chosen variable is found, applied to convex optimization problems.

For general convex optimization problems, the method need not converge
and/or its fixed points need not be global minima. A simple example is
the unconstrained minimization of the function
$f(x,y)=\max\{x-2y,y-2x\}$, which is unbounded but any point with
$x=y$ is a coordinate-wise local minimum. Despite this drawback,
(block-)coordinate minimization can be very successful for some
large-scale convex non-differentiable problems. The prominent example
is the class of {\em convergent message passing\/} methods for solving
dual linear programming (LP) relaxation of maximum a posteriori (MAP)
inference in graphical models, which can be seen as various forms of
(block-)coordinate descent applied to various forms of the dual. In
the typical case, the dual LP relaxation boils down to the
unconstrained minimization of a convex piece-wise affine (hence
non-differentiable) function.  These methods include max-sum diffusion
\cite{Kovalevsky-diffusion,Schlesinger-2011,Werner-PAMI07},
TRW-S~\cite{Kolmogorov06}, MPLP~\cite{Globerson08}, and
SRMP~\cite{Kolmogorov-PAMI-2015}. They do not guarantee global
optimality but for large sparse instances from computer vision the
achieved coordinate-wise local optima are very good and TRW-S is
significantly faster than competing
methods~\cite{Szeliski:study:PAMI:2008,Kappes-study-2015}, including
popular first-order primal-dual methods such as ADMM~\cite{Boyd:2011}
or~\cite{ChambollePock-2011}.

This is a motivation to look for other classes of convex optimization
problems for which (block-)coordinate descent would work well or,
alternatively, to extend convergent message passing methods to a wider
class of convex problems than the dual LP relaxation of MAP
inference. A step in this direction is the
work~\cite{Prusa-Werner-arXiv-relint}, where it was observed that if
the minimizer of the problem over the current variable block is not
unique, one should choose a minimizer that lies in the {\em relative
interior\/} of the set of block-optimizers. It is shown that any
update satisfying this rule is, in a precise sense, not worse than any
other exact update. Message-passing methods such as max-sum diffusion
and TRW-S satisfy this rule. If max-sum diffusion is modified to
violate the relative interior rule, it can quickly get stuck in a very
poor coordinate-wise local minimum.

To be precise, suppose we minimize a convex function
$f{:}\ X\to\mathbb{R}$ on a closed convex set
$X\subseteq\mathbb{R}^n$. We assume that $f$~is bounded from below
on~$X$. For brevity of formulation, we rephrase this as the
minimization of the extended-valued function
$\bar f{:}\ \mathbb{R}^n\to\mathbb{R}\cup\{\infty\}$ such that
$\bar f(x)=f(x)$ for $x\in X$ and $\bar f(x)=\infty$ for $x\notin X$.
One iteration of coordinate minimization with the relative interior
rule~\cite{Prusa-Werner-arXiv-relint} chooses a variable index
$i\in[n]=\{1,\ldots,n\}$ and replaces an estimate
$x^k=(x_1^k,\ldots,x_n^k)\in X$ with a new estimate
$x^{k+1}=(x^{k+1}_1,\ldots,x^{k+1}_n)\in X$ such
that\footnote{In~\cite{Prusa-Werner-arXiv-relint}, the iteration is
formulated in a more abstract (coordinate-free) notation. Since we
focus only on coordinate-wise minimization here, we use a more
concrete notation.}
\begin{align*}
x_i^{k+1} &\in \ri \argmin_{y\in\mathbb{R}} \bar f(x^k_1,\ldots,x^k_{i-1},y,x^k_{i+1},\ldots,x^k_n) ,\\
x_j^{k+1} &= x_j^k \quad \forall j\neq i ,
\end{align*}
where $\ri Y$ denotes the relative interior of a convex set~$Y$. As
this is a univariate convex problem, the set
$Y=\argmin_{y\in\mathbb{R}} \bar f(x^k_1,\ldots,x^k_{i-1},y,x^k_{i+1},\ldots,x^k_n)$
is either a singleton or an interval. In the latter case, the relative
interior rule requires that we choose~$x_i^{k+1}$ from the interior of
this interval. A point $x=(x_1,\ldots,x_n)\in X$ that satisfies
\[
x_i \in \ri \argmin_{y\in\mathbb{R}} \bar f(x_1,\ldots,x_{i-1},y,x_{i+1},\ldots,x_n)
\]
for all $i\in[n]$ is called a (coordinate-wise) {\em interior local
minimum\/} of function~$f$ on set~$X$.

Some classes of convex problems are solved by coordinate-wise
minimization exactly. E.g., for unconstrained minimization of a
differentiable convex function, it is easy to see that any fixed point
of the method is a global minimum; moreover, it has been proved that
if the function has unique univariate minima, then any limit point is
a global minimum \cite[\S2.7]{Bertsekas99}. The same properties hold
for convex functions whose non-differentiable part is
separable~\cite{Tseng:2001}. Note that these classical results need
not assume the relative interior
rule~\cite{Prusa-Werner-arXiv-relint}.

Therefore, it is natural to ask if the relative interior rule can
widen the class of convex optimization problems that are exactly
solved by coordinate-wise minimization. Leaving convergence
aside\footnote{We do not discuss convergence in this paper and assume
that the method converges to an interior local minimum. This is
supported by experiments, e.g., max-sum diffusion and TRW-S have this
property. More on convergence can be found
in~\cite{Prusa-Werner-arXiv-relint}.}, more precisely we can ask for
which problems interior local minima are global minima. A succinct
characterization of this class is currently out of reach. Two
subclasses of this class are known
\cite{Kolmogorov06,Schlesinger-2011,Werner-PAMI07}: the dual LP
relaxation of MAP inference with pairwise potential functions and two
labels, or with submodular potential functions.

In this paper, we restrict ourselves to linear programs (where $f$~is
linear and $X$~is a convex polyhedron) and present a new class of
linear programs with this property. We show that dual LP relaxations
of a number of combinatorial optimization problems belong to this
class and coordinate-wise minimization converges in reasonable time on
large practical instances. Unfortunately, the practical impact of this
result is limited because there exist more efficient algorithms for
solving these LP relaxations, such as reduction to max-flow. It is
open whether there exist some useful classes of convex problems that
are exactly solvable by (block-)coordinate descent but not solvable by
more efficient methods. There is a possibility that our result and the
proof technique will pave the way to such results.

\section{Reformulations of Problems}
\label{sec:reforms}

Before presenting our main result, we make an important remark: while
a convex optimization problem can be reformulated in many ways to an
`equivalent' problem which has the same global minima, not all of
these transformations are equivalent with respect to coordinate-wise
minimization, in particular, not all preserve interior local minima.

\begin{example}
One example is dualization. If coordinate-wise minimization achieves
good local (or even global) minima on a convex problem, it can get
stuck in very poor local minima if applied to its dual. Indeed, trying
to apply (block-)coordinate minimization to the {\em primal\/} LP
relaxation of MAP inference (linear optimization over the local
marginal polytope) has been futile so far.
\end{example}

\begin{example}\label{ex:redundant_constraint}
Consider the linear program $\min \{ x_1 + x_2 \mid x_1, x_2 \geq 0\}$,
which has one interior local minimum with respect to individual coordinates that also corresponds to the unique global optimum. But if one adds a redundant constraint, namely $x_1 = x_2$, then any feasible point will become an interior local minimum w.r.t. individual coordinates, because the redundant constraint blocks changing the variable $x_i$ without changing $x_{3-i}$ for both $i \in \{1,2\}$.
\end{example}

\begin{example}\label{ex:sum_of_maxima}
Consider the linear program
\begin{subequations} \label{eq:example_formulation1}
\begin{align}
\min & \, \sum_{j = 1}^m z_j \span\span\\
z_j & \geq a_{ij}^Tx + b_{ij} & \forall & i \in [n], j \in [m] \\
z & \in \mathbb{R}^m, x \in \mathbb{R}^p\span\span 
\end{align}
\end{subequations}
which can be also formulated as
\begin{subequations} \label{eq:example_formulation2}
\begin{align}
\min & \, \sum_{j  = 1}^m \max_{i =1}^n (a_{ij}^Tx + b_{ij}) \\
x & \in \mathbb{R}^p.
\end{align}
\end{subequations}
Optimizing over the individual variables by coordinate-wise minimization in \eqref{eq:example_formulation1} does not yield the same interior local optima as in \eqref{eq:example_formulation2}. For instance, assume that $m = 3$, $n = p = 1$ and the problem \eqref{eq:example_formulation2} is given as
\begin{equation}\label{example_minmaxform}
\min \, \left( \max\{x,0 \} + \max\{-x,-1 \} + \max\{-x,-2 \} \right),
\end{equation}
where $x \in \mathbb{R}$. Then, when optimizing directly in form \eqref{example_minmaxform}, one can see that all the interior local optima are global optimizers.

However, when one introduces the variables $z \in \mathbb{R}^3$ and applies coordinate-wise minimization on the corresponding problem \eqref{eq:example_formulation1}, then there are interior local optima that are not global optimizers, for example $x = z_1 = z_2 = z_3 = 0$, which is an interior local optimum, but is not a global optimum.

On the other hand, optimizing over blocks of variables $\{z_1, \dots, z_{m}, x_i\}$ for each $i \in [p]$ in case \eqref{eq:example_formulation1} is equivalent to optimization over individual $x_i$ in formulation \eqref{eq:example_formulation2}. 
\end{example}

\section{Main Result}

The optimization problem with which we are going to deal is in its most general form defined as
\begin{subequations} \label{eq:primal_simplified}
\begin{align}
\min \; \; & \Bigl(\sum_{i = 1}^m \max \{w_i-\varphi_i, 0\} + a^T\varphi + b^T\lambda + \sum_{j = 1}^p \max \{v_j + A_{:j}^T\varphi + B_{:j}^T\lambda,0\} \Bigr) \label{eq:primal_simplified_criterion}\\
&\lbvarphi_i \leq \varphi_i \leq \ubvarphi_i \fspace \forall i \in [m] \label{eq:primal_simplified:box1}\\ 
& \lblambda_i \leq \lambda_i \leq \ublambda_i \fspace \forall i \in [n], \label{eq:primal_simplified:box2}
\end{align}
\end{subequations}
where $A \in \mathbb{R}^{m \times p}, B \in \mathbb{R}^{n \times p}$, $a \in \mathbb{R}^{m}$, $b \in \mathbb{R}^{n}$, $w \in \mathbb{R}^m$, $v \in \mathbb{R}^p$, $\lbvarphi \in (\mathbb{R} \cup \{-\infty\})^m$, $\ubvarphi \in (\mathbb{R} \cup \{\infty\})^m$, $\lblambda \in (\mathbb{R} \cup \{-\infty\})^n$, $\ublambda \in (\mathbb{R} \cup \{\infty\})^n$ (assuming $\lbvarphi < \ubvarphi$ and $\lblambda < \ublambda$). We optimize over variables $\varphi \in \mathbb{R}^m$ and $\lambda \in \mathbb{R}^n$. $A_{:j}$ and $A_{i:}$ denotes the $j$-th column and $i$-th row of~$A$, respectively.

Applying coordinate-wise minimization with relative-interior rule on the problem \eqref{eq:primal_simplified} corresponds to cyclic updates of variables, where each update corresponds to finding the region of optima of a convex piecewise-affine function of one variable on an interval. If the set of optimizers is a singleton, then the update is straightforward. If the set of optimizers is a bounded interval $[a,b]$, the variable is assigned the middle value from this interval, i.e. $(a+b)/2$. If the set of optima is unbounded, i.e. $[a,\infty)$, then we set the variable to the value $a+\Delta$, where $\Delta > 0$ is a fixed constant. In case of $(-\infty,a]$, the variable is updated to $a-\Delta$. The details for the update in this setting are in Appendix \ref{ap:details_updates}.

\begin{theorem}\label{th:theorem_optimality}
Any interior local optimum of \eqref{eq:primal_simplified} w.r.t. individual coordinates is its global optimum if
\begin{itemize}
\setlength\itemsep{0em}
\item matrices $A,B$ contain only values from the set $\{-1,0,1\}$ and contain at most two non-zero elements per row
\item  vector $a$ contains only elements from the set $(-\infty,-2] \cup \{-1,0,1,2\} \cup [3, \infty)$
\item vector $b$ contains only elements from the set $(-\infty,-2] \cup \{-1,0,1\} \cup [2, \infty)$.
\end{itemize}
\end{theorem}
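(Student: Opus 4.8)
The plan is to fix an interior local optimum $x^\star=(\varphi^\star,\lambda^\star)$ of \eqref{eq:primal_simplified} with respect to individual coordinates and prove directly that it is a global optimum. Since the objective \eqref{eq:primal_simplified_criterion} is convex and piecewise affine and the feasible set defined by \eqref{eq:primal_simplified:box1}--\eqref{eq:primal_simplified:box2} is a box, global optimality is equivalent to the nonexistence of a feasible descent direction, i.e.\ to $f'(x^\star;d)\ge 0$ for every direction $d=(d^\varphi,d^\lambda)$ that is feasible for the active box constraints at $x^\star$. My first step is therefore to write $f'(x^\star;d)$ explicitly as a sum of three kinds of contributions: the linear part $a^Td^\varphi+b^Td^\lambda$; one term per $i\in[m]$ coming from $\max\{w_i-\varphi_i,0\}$; and one term per $j\in[p]$ coming from $\max\{v_j+A_{:j}^T\varphi+B_{:j}^T\lambda,0\}$. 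Each of these terms depends only on the \emph{active set} of $x^\star$, i.e.\ on whether the relevant argument is strictly positive, strictly negative, or exactly zero (a kink); I would record these cases by indicators such as $\iverson{w_i-\varphi_i^\star=0}$ and $\iverson{v_j+A_{:j}^T\varphi^\star+B_{:j}^T\lambda^\star=0}$, so that $f'(x^\star;\cdot)$ becomes an explicit convex piecewise-affine function of $d$.

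The second step is to turn the hypothesis that $x^\star$ is a coordinate-wise interior local optimum into a collection of \emph{local} sign conditions. Restricting $f$ to a single coordinate yields a univariate convex piecewise-affine function $g$, and being an interior local optimum means $x_i^\star\in\ri\argmin g$. The essential consequence of the relative-interior rule is the clean dichotomy for box-interior coordinates: either the one-sided derivatives satisfy $g^-(x_i^\star)<0<g^+(x_i^\star)$ (isolated univariate minimum) or $g^-(x_i^\star)=g^+(x_i^\star)=0$ (and $x_i^\star$ lies strictly inside a flat segment), while the ``half-zero'' situation in which $x_i^\star$ is an endpoint of the optimal segment is excluded; for box-boundary coordinates only the inward one-sided derivative is constrained to be nonnegative. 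Combining this with the explicit slopes of $g$---namely $a_i$ (resp.\ $b_i$) shifted by the $\{-1,0,1\}$ slopes of the at most three (resp.\ two) active max-terms touching that coordinate---gives one such constraint per coordinate.

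The third and central step is to take an arbitrary feasible $d$ and deduce $f'(x^\star;d)\ge 0$ from these per-coordinate constraints. My approach is a decomposition/rounding argument: assuming for contradiction that $f'(x^\star;d)<0$ for some feasible $d$, I would use the structure of $A,B$ to exhibit a single coordinate $k$ and a sign $\sigma\in\{+1,-1\}$ with $f'(x^\star;\sigma e_k)<0$, contradicting the local optimality from the second step. This is exactly where the hypotheses enter. Because every row of $A$ and $B$ has at most two nonzero entries, each variable is coupled to at most two of the max-terms, so $f'(x^\star;\cdot)$ has a sparse, graph-like coupling (max-terms as vertices, variables as edges joining them, reminiscent of the pairwise two-label case mentioned in the introduction) that can be analyzed locally; because all entries lie in $\{-1,0,1\}$, the per-coordinate slope increments are integers; and because the admissible values of $a_i$ and $b_i$ forbid precisely the non-integer numbers in the critical ranges $(-2,3)$ and $(-2,2)$, the zero-crossing produced by the local conditions remains at an integer slope and cannot be destroyed when $d$ is replaced by a coordinate step.

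I expect this rounding step to be the main obstacle: verifying that a global descent direction always projects onto a coordinate descent direction under exactly these arithmetic constraints will require a case analysis over the active sets of the max-terms and over the sign patterns of the (at most two) admissible nonzeros in each row, and it is here that the asymmetry between the allowed sets for $a$ and for $b$---traceable to the extra slope $-1$ contributed by the $\max\{w_i-\varphi_i,0\}$ terms that couple only to the $\varphi$ variables---must be matched exactly. The role of these admissible sets is precisely to rule out the fractional, ``blocked'' configurations analogous to the spurious interior local optimum created by the redundant constraint in Example~\ref{ex:redundant_constraint}, where a point would satisfy all coordinate-wise conditions without being globally optimal.
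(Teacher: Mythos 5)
Your first two steps are sound: for a convex piecewise-affine objective over a box, global optimality of $x^\star$ is indeed equivalent to $f'(x^\star;d)\ge 0$ for all box-feasible directions $d$, and your translation of the relative-interior rule into the per-coordinate dichotomy (either $g^-<0<g^+$, or $g^-=g^+=0$ with $x_i^\star$ strictly inside a flat segment, the ``half-zero'' endpoint cases excluded) is the correct way to use interiority. The genuine gap is your third step, which is not an argument but a restatement of the theorem. The claim that a feasible descent direction $d$ can always be collapsed to a single-coordinate descent direction $\sigma e_k$ is precisely what fails for general non-differentiable convex problems (cf.\ $\max\{x-2y,y-2x\}$ in the introduction, or Example~\ref{ex:redundant_constraint}), so it can hold only by virtue of the arithmetic hypotheses on $A,B,a,b$; yet you give no mechanism for extracting $k$ and $\sigma$ from $d$, only the expectation that ``a case analysis over active sets and sign patterns'' will succeed. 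A primal proof along your lines must rule out descent directions supported on many coordinates (paths or cycles in your variable/max-term graph) for which every individual coordinate step strictly increases $f$; integrality of the slope increments alone does not do this, and nothing in the proposal does.

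For contrast, the paper's proof avoids this global-to-local step entirely by exhibiting a certificate: it embeds \eqref{eq:primal_simplified} in the primal-dual pair \eqref{eq:LP} and, for any interior local optimum, writes down an explicit dual point (Theorem~\ref{th:theorem_dual_solution}) that is feasible and satisfies complementary slackness. Three ingredients make that verification finite and local: the candidate dual values are determined coordinate-by-coordinate from comparisons of $\varphi_i$ (resp.\ $\lambda_i$) with its bounds, $w_i$, and the breakpoints of the univariate restriction (Observations~\ref{ob:x_redefinition_a}--\ref{ob:lambda_comparison}, Consequence~\ref{co:consequence_independence}); Lemma~\ref{lm:lemma_breakpoints} shows the relative interior of a univariate optimizer set never contains a breakpoint, which is exactly where the relative-interior rule enters; and the two nontrivial dual constraints \eqref{eq:LP_c7}, \eqref{eq:LP_c8} are then checked row by row over a finite set of orderings and coefficient values (roughly $3^2\cdot 6\cdot 2^4\cdot 5!$ cases, automated), which is where the conditions on $A,B,a,b$ are used. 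To complete your approach you would, in effect, have to reconstruct this dual certificate implicitly. One further inaccuracy worth fixing: at a box-boundary coordinate, the relative-interior rule forces the inward one-sided derivative to be \emph{strictly} positive (a zero inward derivative would make $x_i^\star$ the endpoint of a nondegenerate optimal segment, hence not in its relative interior), not merely nonnegative as you state; the configurations this strictness excludes are exactly the degenerate ones your step three would otherwise have to handle.
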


In order to prove Theorem~\ref{th:theorem_optimality}, we formulate
problem~\eqref{eq:primal_simplified} as a linear program by
introducing additional variables $\alpha \in \mathbb{R}^m$ and
$\beta \in \mathbb{R}^p$ and construct its dual. The proof of
optimality is then obtained (see
Theorem~\ref{th:theorem_dual_solution}) by constructing a dual
feasible solution that satisfies complementary slackness.

The primal linear program (with corresponding dual variables and constraints on the same lines) reads

\begin{subequations} \label{eq:LP}
\begin{align}
\min \sum_{i \in [m]} \alpha_i + \sum_{i \in [p]}&\beta_i + a^T\varphi + b^T\lambda &  \max \, f(z,y,s,r,&q,x) &&   \\
\beta_j - A_{:j}^T\varphi - B_{:j}^T\lambda & \geq v_j & x_j & \geq 0 & \forall & j \in [p] \label{eq:LP_c1}\\
\alpha_i + \varphi_i &\geq w_i & s_i & \geq 0 & \forall & i \in [m] \label{eq:LP_c2}\\
\varphi_i &\geq \lbvarphi_i & y_i & \geq 0  & \forall & i \in [m] \label{eq:LP_c3}\\ 
\varphi_i & \leq \ubvarphi_i & z_i & \leq 0  & \forall & i \in [m] \label{eq:LP_c4}\\ 
\lambda_i & \geq \lblambda_i & q_i & \geq 0  & \forall & i \in [n] \label{eq:LP_c5}\\
\lambda_i & \leq \ublambda_i & r_i & \leq 0 & \forall & i \in [n] \label{eq:LP_c6}\\
\varphi_i &\in \mathbb{R} & s_i + z_i + y_i - A_{i:}^Tx &= a_i & \forall & i \in [m] \label{eq:LP_c7}\\
\lambda_i &\in \mathbb{R} & r_i + q_i - B_{i:}^Tx &= b_i & \forall & i \in [n] \label{eq:LP_c8}\\
\beta_j & \geq 0 & x_j &\leq 1 & \forall & j \in [p] \label{eq:LP_c9}\\
\alpha_i & \geq 0 & s_i &\leq 1 & \forall & i \in [m], \label{eq:LP_c10}
\end{align}
\end{subequations}
where the dual criterion is
\begin{equation}
f(z,y,s,r,q,x) = {\ubvarphi}^Tz+{\lbvarphi}^Ty+w^Ts+{\ublambda}^Tr+{\lblambda}^Tq+v^Tx
\end{equation}
and clearly, at optimum of the primal, we have
\begin{subequations}\label{eq:additional_variable_values}
\begin{align}
\alpha_i & = \max\{w_i-\varphi_i,0\} & &\forall i \in [m] \\
\beta_j & = \max\{v_j + A_{:j}^T\varphi + B_{:j}^T\lambda,0\} & &\forall j \in [p].
\end{align}
\end{subequations}
The variables $\alpha,\beta$ were eliminated from the primal formulation \eqref{eq:LP} to obtain \eqref{eq:primal_simplified} due to similar reasoning as in Example \ref{ex:sum_of_maxima}. We also remark that setting $\ubvarphi_i = \infty$ (resp. $\lbvarphi_i = -\infty$, $\ublambda_i = \infty$, $\lblambda_i = -\infty$) results in $z_i = 0$ (resp. $y_i = 0$, $r_i = 0$, $q_i = 0$).

Even though the primal-dual pair \eqref{eq:LP} might seem
overcomplicated, such general description is in fact necessary because
as described in~\S\ref{sec:reforms}, equivalent reformulations may not
preserve the structure of interior local minima and we would like to
describe as general class, where optimality is guaranteed, as
possible.

\begin{example}
To give the reader better insight into the problems \eqref{eq:LP}, we present a simplification based on omitting the matrix $A$ (i.e. $m = 0$) and setting $\lblambda = 0$, $\ublambda = \infty$, which results in $r_i = 0$ and variables $q_i$ become slack variables in \eqref{eq:LP_c8}. The primal-dual pair in this case then simplifies to
\begin{subequations}
\begin{align}
\min \sum_{i \in [p]}&\beta_i + b^T\lambda &  \max \, &v^Tx  &  \\
\beta_j - B_{:j}^T\lambda & \geq v_j & x_j & \geq 0 & \forall & j \in [p] \\
\beta_j & \geq 0 & x_j &\leq 1 & \forall & j \in [p] \\
\lambda_i & \geq 0 & - B_{i:}^Tx &\leq b_i & \forall & i \in [n].
\end{align}
\end{subequations}
\end{example}

\begin{theorem}\label{th:theorem_dual_solution}
For a problem \eqref{eq:primal_simplified} satisfying conditions of Theorem \ref{th:theorem_optimality} and a given interior local minimum $(\varphi, \lambda)$, the values\footnote{We define $h_{[x,y]}(z) = \min\{y, \max\{z, x\}\}$ to be the projection of $z \in \mathbb{R}$ onto the interval $[x,y] \subseteq \mathbb{R}$. The projection onto unbounded intervals $(-\infty,0]$ and $[0,\infty)$ is defined similarly and is denoted by $h_{\mathbb{R}^-_0}$ and $h_{\mathbb{R}^+_0}$ for brevity.}
\begin{subequations}\label{eq:dual_solution}
\begin{align*}
x_j &= 
  \begin{cases}
    0 & \text{if } A_{:j}^T\varphi + B_{:j}^T\lambda + v_j < 0 \\
    \frac{1}{2} & \text{if } A_{:j}^T\varphi + B_{:j}^T\lambda + v_j = 0 \\
    1 & \text{if } A_{:j}^T\varphi + B_{:j}^T\lambda + v_j > 0
  \end{cases} &
s_i &= 
  \begin{cases}
    1 & \text{if } w_i>\varphi_i \\
    0 & \text{if } w_i<\varphi_i \\
    h_{[0,1]}(a_i+A_{i:}^Tx) & \text{if } w_i=\varphi_i
  \end{cases} \\
r_i &= 
  \begin{cases}
    0 & \text{if } \lambda_i < \ublambda_i \\
    h_{\mathbb{R}^-_0}(b_i + B_{i:}^Tx) & \text{if } \lambda_i = \ublambda_c
  \end{cases} & 
  z_i &= 
  \begin{cases}
    0 & \text{if } \varphi_i < \ubvarphi_i \\
    h_{\mathbb{R}^-_0}(a_i + A_{i:}^Tx - s_i) & \text{if } \varphi_i = \ubvarphi_i
  \end{cases} \\
q_i &= 
  \begin{cases}
    0 & \text{if } \lambda_i > \lblambda_i \\
    h_{\mathbb{R}^+_0}(b_i + B_{i:}^Tx) & \text{if } \lambda_i = \lblambda_i
  \end{cases} &
y_i &= 
  \begin{cases}
    0 & \text{if } \varphi_i > \lbvarphi_i \\
    h_{\mathbb{R}^+_0}(a_i + A_{i:}^Tx - s_i) & \text{if } \varphi_i = \lbvarphi_i
  \end{cases}
\end{align*}
\end{subequations}
are feasible for the dual \eqref{eq:LP} and satisfy complementary slackness with primal \eqref{eq:LP}, where the remaining variables of the primal are given by \eqref{eq:additional_variable_values}.
\end{theorem}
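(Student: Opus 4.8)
The plan is to verify the two assertions in the statement — that $(z,y,s,r,q,x)$ is feasible for the dual \eqref{eq:LP}, and that it satisfies complementary slackness with the primal point determined by $(\varphi,\lambda)$ together with $\alpha,\beta$ from \eqref{eq:additional_variable_values} — from which global optimality (Theorem~\ref{th:theorem_optimality}) follows by the complementary slackness theorem. I would split the work into three parts: the sign and box constraints on the dual variables, complementary slackness, and the two linear equalities \eqref{eq:LP_c7}--\eqref{eq:LP_c8}. The first two parts are essentially immediate, so the entire content sits in the equalities. For the sign/box part, $x_j\in\{0,\tfrac12,1\}\subseteq[0,1]$ and $s_i\in[0,1]$ (it is always $0$, $1$, or a projection onto $[0,1]$), while $y_i,q_i\ge 0$ and $z_i,r_i\le 0$ because they are either $0$ or projections onto $\mathbb{R}^+_0$, resp.\ $\mathbb{R}^-_0$.

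For complementary slackness I would check each pair and observe that the case splits defining the dual variables are aligned with the activity of the matching primal constraint. For instance $x_j(\beta_j-A_{:j}^T\varphi-B_{:j}^T\lambda-v_j)=0$ holds because $x_j=0$ exactly when the argument is negative, while $\beta_j=\max\{v_j+A_{:j}^T\varphi+B_{:j}^T\lambda,0\}$ makes the bracket vanish as soon as the argument is $\ge 0$; the conditions $\beta_j(1-x_j)=0$, $s_i(\alpha_i+\varphi_i-w_i)=0$, $\alpha_i(1-s_i)=0$, and the four bound conditions $y_i(\varphi_i-\lbvarphi_i)=0$, $z_i(\varphi_i-\ubvarphi_i)=0$, $q_i(\lambda_i-\lblambda_i)=0$, $r_i(\lambda_i-\ublambda_i)=0$ are built into the definitions in the same manner.

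The heart of the proof is the equality $s_i+z_i+y_i=a_i+A_{i:}^Tx$ (and the analogous $r_i+q_i=b_i+B_{i:}^Tx$). The reformulation I would use is that an interior local minimum is exactly a point at which $0\in\ri\partial g_i(\varphi_i)$, where $g_i$ is the one-dimensional restriction of the objective to the coordinate $\varphi_i$ including the box indicator; for convex piecewise-affine $g_i$ this is equivalent to $\varphi_i\in\ri\argmin g_i$. Writing $d_i=a_i+A_{i:}^Tx-s_i$, one checks that each $x_j$ is a subgradient selection of the $j$-th hinge term (the canonical midpoint $\tfrac12$ being taken on ties $A_{:j}^T\varphi+B_{:j}^T\lambda+v_j=0$), and that $-s_i$ is a subgradient selection of $\max\{w_i-\varphi_i,0\}$, where on the tie $w_i=\varphi_i$ the value $s_i=h_{[0,1]}(a_i+A_{i:}^Tx)$ is chosen so as to drive $d_i$ toward $0$. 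Consequently $d_i$ is a specific element of $\partial g_i(\varphi_i)$, and the equality reduces, after splitting on the position of $\varphi_i$ in its (nondegenerate) box, to $d_i=0$ when $\varphi_i$ is strictly interior, $d_i\le 0$ when $\varphi_i=\ubvarphi_i$, and $d_i\ge 0$ when $\varphi_i=\lbvarphi_i$, with $z_i,y_i$ supplying the clamped remainder through $h_{\mathbb{R}^-_0},h_{\mathbb{R}^+_0}$.

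Here the structural hypotheses enter and constitute the main obstacle. Because $A$ has entries in $\{-1,0,1\}$ with at most two nonzeros per row, $\partial g_i(\varphi_i)$ is the sum of the point $a_i$, the interval $[-1,0]$ (present only when $w_i=\varphi_i$), and at most two further unit intervals or integer points; it is therefore an interval with integer endpoints of length at most $3$, and $A_{i:}^Tx$ equals its midpoint on the tie components. The condition $0\in\ri\partial g_i(\varphi_i)$ confines $a_i$ to an open interval with integer endpoints whose length is one plus the number of active ties, and the admissible values of $a_i$ listed in Theorem~\ref{th:theorem_optimality} are precisely those that, intersected with this interval, force $a_i+A_{i:}^Tx$ into the exact range ($[0,1]$ in the interior case, $(-\infty,0]$ at the upper bound, $[0,\infty)$ at the lower bound) on which the projection defining $s_i,z_i,y_i$ is exact. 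The same bookkeeping, now without the $[-1,0]$ contribution (since $\lambda_i$ occurs in no $\max\{w_i-\cdot,0\}$ term), yields the shorter admissible set for $b_i$. I expect the genuine difficulty to be this finite but delicate case analysis at the box boundaries and on ties: one must check that for every admissible $a_i$ (resp.\ $b_i$) and every tie pattern compatible with $0\in\ri\partial g_i$ the midpoint and projection choices land in the exact range, a single non-admissible value (for example $a_i\in(\tfrac12,1)$ at an active upper bound) already breaking feasibility.
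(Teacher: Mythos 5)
Your decomposition agrees with the paper's: the sign/box constraints on $(z,y,s,r,q,x)$ and complementary slackness are dispatched as immediate (the paper does exactly this in one sentence after the theorem and at the end of Appendix~\ref{ap:proof_theorem}), and the entire content is the row-wise verification of the equalities \eqref{eq:LP_c7} and \eqref{eq:LP_c8}. Where you genuinely differ is in how that verification is organized. The paper never uses subdifferentials: it shows (Observations~\ref{ob:varphi_comparison} and~\ref{ob:lambda_comparison}, Consequence~\ref{co:consequence_independence}) that the dual values \eqref{eq:dual_solution} depend only on how $\varphi_i$ (resp.\ $\lambda_i$) compares with $\lbvarphi_i,\ubvarphi_i,w_i$ and the breakpoints of the restriction \eqref{eq:restriction_to_varphi_c} (resp.\ \eqref{eq:restriction_to_lambda_c}); it then needs Lemma~\ref{lm:lemma_breakpoints} (no breakpoint or bound lies in the relative interior of a non-degenerate set of optimizers) so that these comparisons are independent of which representative of the relative interior is taken; and it finishes by exhaustion over orderings of $\lbvarphi_i,\ubvarphi_i,w_i$ and the breakpoints together with all coefficient values --- by hand for $a_i<-2$, $a_i>3$, $b_i<-2$, $b_i>2$, and by an attached Matlab script for the at most $3^2\cdot 6\cdot 2^4\cdot 5!$ remaining cases. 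Your pointwise characterization $0\in\ri\partial g_i(\varphi_i)$ of an interior local minimum makes Lemma~\ref{lm:lemma_breakpoints} unnecessary, and your reduction of \eqref{eq:LP_c7} to sign conditions on the single number $d_i=a_i+A_{i:}^Tx-s_i$ ($d_i=0$ strictly inside the box, $d_i\le 0$ at $\ubvarphi_i$, $d_i\ge 0$ at $\lbvarphi_i$) is exactly the right invariant; it collapses the paper's enumeration of orderings into a much smaller analysis over tie patterns, the signs $A_{ij}$, and the box position. This is cleaner scaffolding than the paper's.

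However, the proof is not finished: the claim that for every admissible $a_i$ (resp.\ $b_i$) and every tie/boundary pattern compatible with $0\in\ri\partial g_i(\varphi_i)$ the midpoint-and-projection selection makes $d_i$ land in the required range \emph{is} Theorem~\ref{th:theorem_dual_solution}, and you state it as an expectation rather than verify it. That finite check is precisely where the hypotheses of Theorem~\ref{th:theorem_optimality} do their work, and it is the part on which the paper spends all of its hand arguments and its automated script. In your framework the check is short but must still be written out: for instance, with one hinge tie and integer $a_i$ the subdifferential is an open unit interval with integer endpoints, so the interior case is vacuous, while at a bound the integrality leaves slack $\tfrac12$ for the midpoint selection; analogous statements are needed for two ties, for a simultaneous tie $w_i=\varphi_i$ (where $s_i$ is a projection, not a midpoint), and for the non-integer admissible values $a_i\in(-\infty,-2]\cup[3,\infty)$, whose subdifferentials do \emph{not} have integer endpoints and instead need the monotonicity argument the paper gives by hand (strict monotonicity forces the appropriate bound to be active and finite, after which $A_{i:}^Tx\in[-2,2]$ and $s_i\in[0,1]$ fix the sign of $d_i$). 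Two slips to correct when you carry this out: the open interval to which $0\in\ri\partial g_i(\varphi_i)$ confines $a_i$ has length equal to the number of active ties, not one plus that number; and your phrase ``interval with integer endpoints'' silently assumes $a_i$ is an integer, which fails exactly for the large admissible values just mentioned.
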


It can be immediately seen that all the constraints of dual 
\eqref{eq:LP} are satisfied except for \eqref{eq:LP_c7} and
\eqref{eq:LP_c8}, which require a more involved analysis. The complete proof of Theorem \ref{th:theorem_dual_solution} is technical (based on verifying many different cases) and given in Appendix \ref{ap:proof_theorem}.

\section{Applications}

Here we show that several LP relaxations of combinatorial problems
correspond to the form~\eqref{eq:primal_simplified} or to the dual \eqref{eq:LP} and
discuss which additional constraints correspond to the assumptions of
Theorem~\ref{th:theorem_optimality}.

\subsection{Weighted Partial Max-SAT}

In weighted partial Max-SAT,
one is given two sets of clauses, soft
and hard. Each soft clause is assigned a positive weight. The task is
to find values of binary variables $x_i \in \{0,1\}$, $i\in [p]$ such
that all the hard clauses are satisfied and the sum of weights of the satisfied 
soft clauses is maximized.

We organize the $m$ soft clauses into a matrix $S\in\{-1,0,1\}^{m\times p}$ defined as
\begin{equation*}
S_{ci} = \begin{cases} 
      1 & \text{if literal } x_i \text{ is present in soft clause } c\\
      -1 & \text{if literal } \neg x_i \text{ is present in soft clause } c\\
      0 & \text{otherwise }
   \end{cases},
\end{equation*}
In addition, we denote $n^S_c=\sum_i \iverson{S_{ci} < 0}$ to be the
number of negated variables in clause~$c$. These numbers are stacked
in a vector $n^S\in\mathbb{Z}^m$. The $h$~hard clauses are organized
in a matrix $H\in\{-1,0,1\}^{h\times p}$ and a vector
$n^H\in\mathbb{Z}^h$ in the same manner.

The LP relaxation of this problem reads
\begin{subequations}\label{eq:maxsat}
\begin{align}
\max & \sum_{c \in [m]} w_c s_c \span\span\\
s_c &\leq S_{c:}^Tx + n^S_c & \forall & c \in [m] \label{eq:maxsat_soft_clause}\\
H_{c:}^Tx + n^H_c & \geq 1 & \forall &  c \in [h] \label{eq:maxsat_hard_clause}\\
x_i & \in [0,1] & \forall &  i \in [p] \\
s_c & \in [0,1] & \forall &  c \in [m],
\end{align}
\end{subequations}
where $w_c \in \mathbb{R}^+_0$ are the weights of the soft clauses
$c \in [m]$. This is a sub-class of the dual \eqref{eq:LP}, where
$A = S$, $B = -H$, $a = n^S$, $b = 1-n^H$, $\lbvarphi = 0$ ($y \geq 0$
are therefore slack variables for the dual constraint \eqref{eq:LP_c7} that
correspond to \eqref{eq:maxsat_soft_clause}), $\ubvarphi = \infty$
(therefore $z = 0$), $\lblambda = -\infty$ (therefore $q = 0$),
$\ublambda = 0$ ($r \leq 0$ are slack variables for the dual constraint 
\eqref{eq:LP_c8} that correspond to
\eqref{eq:maxsat_hard_clause}), $v = 0$.

Formulation \eqref{eq:maxsat} satisfies the conditions of Theorem \ref{th:theorem_optimality} if each of the clauses has length at most 2. In other words, optimality is guaranteed for weighted partial Max-2SAT.

Also notice that if we omitted the soft clauses \eqref{eq:maxsat_soft_clause} and instead set $v = -1$, we would obtain an instance of Min-Ones SAT, which could be generalized to weighted Min-Ones SAT. This relaxation would still satisfy the requirements of Theorem \ref{th:theorem_optimality} if all the present hard clauses have length at most 2.

\subsubsection{Results}

We tested the method on 800 smallest\footnote{Smallest in the sense of
the file size. All instances could not have been evaluated due to
their size and lengthy evaluation.} instances that appeared in Max-SAT
Evaluations \cite{maxsatevaluations}
in years 2017 \cite{ansotegui2017maxsat} and 2018
\cite{bacchus2018maxsat}. The results on the instances are divided
into groups in Table \ref{ta:table_maxsat} based on the minimal and
maximal length of present clauses. We have also tested this approach
on 60 instances of weighted Max-2SAT from Ke Xu~\cite{xu2003many}. The highest number
of logical variables in an instance was 19034 and the highest overall
number of clauses in an instance was 31450.  It was important to
separate the instances without unit clauses (i.e. clauses of length
1), because in such cases the LP relaxation \eqref{eq:maxsat} has a
trivial optimal solution with $x_i = \frac12$ for all $i \in V$.

Coordinate-wise minimization was stopped when the criterion did not improve by at least $\epsilon = 10^{-7}$ after a whole cycle of updates for all variables. We report the quality of the solution as the median and mean relative difference between the optimal criterion and the criterion reached by coordinate-wise minimization before termination.

Table~\ref{ta:table_maxsat} reports not only instances of
weighted partial Max-2SAT but also instances with longer clauses,
where optimality is no longer guaranteed. Nevertheless, the relative
differences on instances with longer clauses still seem not too
large and could be usable as bounds in a branch-and-bound scheme.

\begin{table}
\centering
{\footnotesize
\begin{tabular}{ |c|c|c|c|c|c|c|c|  }
 \hline
 \multicolumn{3}{|c|}{Instance Group Specification} & \multicolumn{2}{|c|}{Results} \\
 \hline
 Min CL & Max CL & \#inst. & Mean RD & Median RD\\
 \hline 
 $\geq 2$   & any    & 91 &   0 & 0\\
1   & 2    & 123 &   $1.44 \cdot 10^{-9}$ & $1.09 \cdot 10^{-11}$\\
  1   & 3    & 99 &  $6.98 \cdot 10^{-3}$ & $1.90 \cdot 10^{-7}$\\
  1 & $\geq 4$ & 487 & $1.26 \cdot 10^{-2}$ & $2.97 \cdot 10^{-3}$ \\ \hline 
 1& 2& 60 & $1.59 \cdot 10^{-9}$ & $5.34 \cdot 10^{-10}$ \\
 \hline
\end{tabular}
}
\caption{Experimental comparison of coordinate-wise minimization and
exact solutions for LP relaxation on instances from \cite{maxsatevaluations} 
(first 4 rows) and ~\cite{xu2003many} (last row).} \label{ta:table_maxsat}
\end{table}

\subsection{Weighted Vertex Cover}\label{se:vertex_cover}

Dual \eqref{eq:LP} also subsumes\footnote{It is only necessary to transform minimization to maximization of negated objective in \eqref{eq:vertex_cover}.} the LP relaxation of weighted vertex cover, which reads
\begin{equation}\label{eq:vertex_cover}
\min \, \Big\{ \sum_{i \in V} v_i x_i \Bigm\vert x_i + x_j \geq 1, \forall \{i,j\} \in E, x_i \in [0,1], \forall i \in V \Big\}
\end{equation}
where $V$ is the set of nodes and $E$ is the set of edges of an undirected graph. This problem also satisfies the conditions of Theorem \ref{th:theorem_optimality} and therefore the corresponding primal \eqref{eq:primal_simplified} will have no non-optimal interior local minima.

On the other hand, notice that formulation \eqref{eq:vertex_cover}, which corresponds to dual  \eqref{eq:LP} can have non-optimal interior local minima even with respect to all subsets of variables of size $|V|-1$, an example is given in Appendix \ref{ap:example_weighted_vertex_cover}.

We reported the experiments on weighted vertex cover in an unpublished
text \cite{CVPR_submission} where the optimality was not proven yet. In
addition, the update designed in \cite{CVPR_submission} {\em ad hoc\/}
becomes just a special case of our general update here.

\subsection{Minimum $st$-Cut, Maximum Flow}\label{sec:mincutmaxflow_coorddesc}

Recall from \cite{fulkerson1962flows} the usual formulation of max-flow problem between nodes $s \in V$ and $t \in V$ on a directed graph with vertex set $V$, edge set $E$ and positive edge weights $w_{ij} \in \mathbb{R}^+_0$ for each $(i,j) \in E$, which reads
\begin{subequations} \label{eq:textbook_formulation_maxflow}
\begin{align}
\max \sum_{(s,i) \in E}& f_{si} \\
0 \leq f_{ij} &\leq w_{ij} & \forall &(i,j) \in E \\
\sum_{(u,i) \in E} f_{ui} - \sum_{(j,u) \in E} f_{ju} &= 0 & \forall & u \in V-\{s,t\}. \label{eq:flow_conservation}
\end{align}
\end{subequations}
Assume that there is no edge $(s,t)$, there are no ingoing edges to $s$ and no outgoing edges from $t$, then any feasible value of $f$ in \eqref{eq:textbook_formulation_maxflow} is an interior local optimum w.r.t. individual coordinates by the same reasoning as in Example \ref{ex:redundant_constraint} due to the flow conservation constraint \eqref{eq:flow_conservation}, which limits each individual variable to a single value. We are going to propose a formulation which has no non-globally optimal interior local optima.

The dual problem to \eqref{eq:textbook_formulation_maxflow} is the minimum $st$-cut problem, which can be formulated as
\begin{subequations}\label{eq:mincut}
\begin{align}
\max \, & \sum_{(i,j) \in E} w_{ij} y_{ij} \\
y_{ij} & \leq 1-x_i+x_j & \forall & (i,j) \in E, i \neq s, j\neq t \\
y_{sj} & \leq x_j & \forall & (s,j) \in E \\
y_{it} & \leq 1-x_i & \forall & (i,t) \in E \\
y_{ij} & \in [0,1] & \forall & (i,j) \in E, \\
x_i & \in [0,1] & \forall & i \in V-\{s,t\},
\end{align}
\end{subequations}
where $y_{ij} = 0$ if edge $(i,j)$ is in the cut and $y_{ij} = 1$ if edge $(i,j)$ is not in the cut. The cut should separate $s$ and $t$, so the set of nodes connected to $s$ after the cut will be denoted by $S$ and $T = V - S$ is the set of nodes connected to $t$. Using this notation, $x_i = \iverson{i \in S}$. Formulation \eqref{eq:mincut} is different from the usual formulation by replacing the variables $y_{ij}$ by $1-y_{ij}$, therefore we also maximize the weight of the not cut edges instead of minimizing the weight of the cut edges, therefore if the optimal value of \eqref{eq:mincut} is $O$, then the value of the minimum $st$-cut equals $\sum_{(i,j)\in E}w_{ij} - O$.

Formulation \eqref{eq:mincut} is subsumed by the dual \eqref{eq:LP} by setting $\lbvarphi = 0$, $\ubvarphi = \infty$ and omitting the $B$ matrix. Also notice that each $y_{ij}$ variable occurs in at most one constraint. The problem \eqref{eq:mincut} therefore satisfies the conditions of Theorem \ref{th:theorem_optimality} and the corresponding primal \eqref{eq:primal_simplified} is a formulation of the maximum flow problem, in which one can search for the maximum flow by coordinate-wise minimization. The corresponding formulation \eqref{eq:primal_simplified} reads
\begin{subequations}\label{eq:maxflow_our}
\begin{align}
\min & \Bigl(\sum_{(i,j) \in E} \max \{w_{ij}-\varphi_{ij}, 0\} + \sum_{(i,j) \in E, i \neq s}\varphi_{ij} +\nonumber \\
& + \sum_{i \in V - \{s,t\}}\max \Bigl\{\sum_{(j,i) \in E} \varphi_{ji} - \sum_{(i,j) \in E} \varphi_{ij},0\Bigr\}\Bigr) \\
& \varphi_{ij} \geq 0 \fspace \forall (i,j) \in E.
\end{align}
\end{subequations}

\subsubsection{Results}

We have tested our formulation for coordinate-wise minimization on max-flow instances\footnote{Available at \url{https://vision.cs.uwaterloo.ca/data/maxflow}.} from computer vision. We report the same statistics as with Max-SAT in Table \ref{ta:table_maxflow}, the instances corresponded to stereo problems, multiview reconstruction instances and shape fitting problems. 

For multiview reconstruction and shape fitting, we were able to run our algorithm only on small instances, which have approximately between $8 \cdot 10^5$ and $1.2 \cdot 10^6$ nodes and between $5 \cdot 10^6$ and $6 \cdot 10^6$ edges. On these instances, the algorithm terminated with the reported precision in 13 to 34 minutes on a laptop.

\begin{table}
\centering
{\footnotesize
\begin{tabular}{ |l|c|c|c|c|c|c|c|  }
 \hline
 \multicolumn{2}{|c|}{Instance Group or Instance} & \multicolumn{2}{|c|}{Results} \\
 \hline
 Name & \!\!\#inst.\!\! & Mean RD & Median RD\\
 \hline
 BVZ-tsukuba \cite{boykov1998markov} & 16 & $6.03 \cdot 10^{-10}$ & $1.17 \cdot 10^{-11}$\\
BVZ-sawtooth \cite{scharstein2002taxonomy} \cite{boykov1998markov}
 & 20 & $9.83 \cdot 10^{-11}$ & $6.11 \cdot 10^{-12}$\\
 BVZ-venus \cite{scharstein2002taxonomy} \cite{boykov1998markov}
 & 22 & $3.40 \cdot 10^{-11}$ & $2.11 \cdot 10^{-12}$\\
 KZ2-tsukuba \cite{kolmogorov2001computing} & 16 & $2.69 \cdot 10^{-10}$ & $1.77 \cdot 10^{-10}$\\
 KZ2-sawtooth \cite{scharstein2002taxonomy} \cite{kolmogorov2001computing}
 & 20 & $4.08 \cdot 10^{-9}$ & $1.56 \cdot 10^{-10}$\\
 KZ2-venus \cite{scharstein2002taxonomy} \cite{kolmogorov2001computing}
 & 22 & $5.21 \cdot 10^{-9}$ & $1.74 \cdot 10^{-10}$\\ \hline
 BL06-camel-sml \cite{lempitsky2006oriented}  & 1 &  \multicolumn{2}{|c|}{$1.21 \cdot 10^{-11}$}\\
BL06-gargoyle-sml \cite{boykov2006photohulls}  & 1 &  \multicolumn{2}{|c|}{$6.29 \cdot 10^{-12}$}\\ \hline
 LB07-bunny-sml \cite{lempitsky2007global} & 1 &  \multicolumn{2}{|c|}{$1.33 \cdot 10^{-10}$}\\
 \hline
\end{tabular}
}
 \caption{Experimental comparison of coordinate-wise minimization on max-flow instances, the references are the original sources of the data and/or to the authors that reformulated these problems as maximum flow. The first 6 rows correspond to stereo problems, the 2 following rows are multiview reconstruction instances, the last row is a shape fitting problem.} \label{ta:table_maxflow}
\end{table}

\subsection{MAP Inference with Potts Potentials}

Coordinate-wise minimization for the dual LP relaxation of MAP inference was intensively studied, see e.g.\ the review \cite{Werner-PAMI07}. One of the formulations is 
\begin{subequations} \label{eq:general_MAP}
\begin{align}
\min \sum_{i \in V} \max_{k \in K} \theta^\delta_i(k) +\sum_{\{i,j\} \in E} \max_{k,l \in K} \theta^\delta_{ij}(k,l) \\
\delta_{ij}(k) \in \mathbb{R} \fspace \forall \{i,j\} \in E, k \in K,
\end{align}
\end{subequations}
where $K$ is the set of labels, $V$ is the set of nodes and $E$ is the set of unoriented edges and
\begin{subequations}
\begin{align}
\theta_i^\delta(k) &= \theta_i(k) - \sum_{j \in N_i}\delta_{ij}(k) \\
\theta_{ij}^\delta(k,l) &= \theta_{ij}(k,l) + \delta_{ij}(k) + \delta_{ji}(l)
\end{align}
\end{subequations}
are equivalent transformations of the potentials. Notice that there are $2 \cdot |E| \cdot |K|$ variables, i.e. two for each direction of an edge. In \cite{Prusa-PAMI-2017}, it is mentioned that in case of Potts interactions, which are given as $\theta_{ij}(k,l) = -\iverson{k \neq l}$, one can add constraints
\begin{subequations}\label{eq:potts_additional_constraint}
\begin{align}
\delta_{ij}(k) + \delta_{ji}(k) &= 0 & \forall & \{i,j\} \in E, k \in K \label{eq:potts_elimination_constraint} \\
-\tfrac12 \leq \delta_{ij}(k) &\leq \tfrac{1}{2} &  \forall & \{i,j\} \in E, k \in K\label{eq:potts_halves}
\end{align}
\end{subequations}
to \eqref{eq:general_MAP} without changing the optimal objective. One can therefore use constraint \eqref{eq:potts_elimination_constraint} to reduce the overall amount of variables by defining
\begin{equation}\label{eq:potts_variables}
\lambda_{ij}(k) = -\delta_{ij}(k) = \delta_{ji}(k)
\end{equation}
subject to $\tfrac12 \leq \lambda_{ij}(k) \leq \tfrac12$. The decision of whether $\delta_{ij}(k)$ or $\delta_{ji}(k)$ should have the inverted sign depends on the chosen orientation $E'$ of the originally undirected edges $E$ and is arbitrary. Also, given values $\delta$ satisfying \eqref{eq:potts_additional_constraint}, it holds for any edge $\{i,j\} \in E$ and pair of labels $k,l \in K$ that $\max\limits_{k,l \in K}\theta_{ij}^\delta(k,l) = 0$, which can be seen from the properties of the Potts interactions.

Therefore, one can reformulate \eqref{eq:general_MAP} into
\begin{subequations}\label{eq:potts}
\begin{align}
\min \sum_{i \in V} & \max_{k \in K} \, \theta^\lambda_i(k)\\
- \tfrac{1}{2} \leq \lambda_{ij}(k) & \leq \tfrac{1}{2} \fspace \forall (i,j) \in E', k \in K,
\end{align}
\end{subequations}
where the equivalent transformation in $\lambda$ variables is given by
\begin{equation}
\theta_i^\lambda(k) = \theta_i(k) + \sum_{(i,j) \in E'} \lambda_{ij}(k) - \sum_{(j,i) \in E'} \lambda_{ji}(k)
\end{equation}
and we optimize over $|E'|\cdot|K|$ variables $\lambda$, the graph $(V,E')$ is the same as graph $(V,E)$ except that each edge becomes oriented (in arbitrary direction). The way of obtaining an optimal solution to \eqref{eq:general_MAP} from an optimal solution of \eqref{eq:potts} is given by \eqref{eq:potts_variables} and depends on the chosen orientation of the edges in $E'$. Also observe that $\theta_i^\delta(k) = \theta_i^\lambda(k)$ for any node $i \in V$ and label $k \in K$ and therefore the optimal values will be equal. This reformulation therefore maps global optima of \eqref{eq:potts} to global optima of \eqref{eq:general_MAP}. However, it does not map interior local minima of \eqref{eq:potts} to interior local minima of \eqref{eq:general_MAP} when $|K| \geq 3$, an example of such case is shown in Appendix \ref{ap:example_potts}.

In problems with two labels ($K = \{1,2\}$), problem \eqref{eq:potts} is subsumed by \eqref{eq:primal_simplified} and satisfies the conditions imposed by Theorem \ref{th:theorem_optimality} because one can rewrite the criterion by observing that
\begin{equation}
\max_{k\in \{1,2\}} \theta^\lambda_i(k) = \max\{\theta^\lambda_i(1)-\theta^\lambda_i(2),0\}+\theta^\lambda_i(2)
\end{equation}
and each $\lambda_{ij}(k)$ is present only in $\theta_i^\lambda(k)$ and $\theta_j^\lambda(k)$. Thus, $\lambda_{ij}(k)$ will have non-zero coefficient in the matrix $B$ only on columns $i$ and $j$. The coefficients of the variables in the criterion are only $\{-1,0,1\}$ and the other conditions are straightforward.

We reported the experiments on the Potts problem in~\cite{CVPR_submission}
where the optimality was not proven yet. In addition, the update
designed in \cite{CVPR_submission} {\em ad hoc\/} becomes just a
special case of our general update here.

\subsection{Binarized Monotone Linear Programs}
\label{sec:monotone}

In \cite{hochbaum2002solving}, integer linear programs with at most two variables per constraint were discussed. It was also allowed to have 3 variables in some constraints if one of the variables occurred only in this constraint and in the objective function. Although the objective function in \cite{hochbaum2002solving} was allowed to be more general, we will restrict ourselves to linear criterion function. It was also shown that such problems can be transformed into binarized monotone constraints over binary variables by introducing additional variables whose amount is defined by the bounds of the original variables, such optimization problem reads
\begin{subequations}\label{eq:binarized_monotone_LP}
\begin{align}
\min \, {w}^Tx&+{e}^Tz \\
Ax - Iz &\leq 0\\
Cx & \leq 0 \\
x &\in \{0,1\}^{n_1}\\
z &\in \{0,1\}^{n_2},
\end{align}
\end{subequations}
where $A,C$ contain exactly one $-1$ per row and exactly one $1$ per
row and all other entries are zero, $I$ is the identity matrix. We
refer the reader to \cite{hochbaum2002solving} for details, where it
is also explained that the LP relaxation of~\eqref{eq:binarized_monotone_LP}
can be solved by min-$st$-cut on an associated graph. We can notice
that the LP relaxation of \eqref{eq:binarized_monotone_LP} is subsumed
by the dual \eqref{eq:LP}, because one can change the minimization into
maximization by changing the signs in $w,e$. Also, the relaxation
satisfies the conditions given by Theorem \ref{th:theorem_optimality}.

In the paper \cite{hochbaum2002solving}, there are listed many problems which are transformable to \eqref{eq:binarized_monotone_LP} and are also directly (without any complicated transformation) subsumed by the dual \eqref{eq:LP} and satisfy Theorem \ref{th:theorem_optimality}, for example, minimizing the sum of weighted completion times of precedence-constrained jobs (ISLO formulation in \cite{chudak1999half}), generalized independent set (forest harvesting problem in  \cite{hochbaum1997forest}), generalized vertex cover \cite{hochbaum2000approximating}, clique problem \cite{hochbaum2000approximating}, Min-SAT (introduced in \cite{kohli1994minimum}, LP formulation in \cite{hochbaum2002solving}).

For each of these problems, it is easy to verify the conditions of Theorem \ref{th:theorem_optimality}, because they contain at most two variables per constraint and if a constraint contains a third variable, then it is the only occurrence of this variable and the coefficients of the variables in the constraints are from the set $\{-1,0,1\}$.

The transformation presented in
\cite{hochbaum2002solving} can be applied to partial Max-SAT and
vertex cover to obtain a problem in the
form~\eqref{eq:binarized_monotone_LP} and solve its LP relaxation. But
this step is unnecessary when applying the presented coordinate-wise
minimization approach.

\section{Concluding Remarks}

We have presented a new class of linear programs that are exactly
solved by coordinate-wise minimization. We have shown that dual LP
relaxations of several well-known combinatorial optimization problems
(partial Max-2SAT, vertex cover, minimum $st$-cut, MAP inference with
Potts potentials and two labels, and other problems) belong, possibly after a
reformulation, to this class. We have shown experimentally (in this
paper and in \cite{CVPR_submission}) that the resulting methods are
reasonably efficient for large-scale instances of these problems. When
the assumptions of Theorem~\ref{th:theorem_optimality} are relaxed
(e.g., general Max-SAT instead of Max-2SAT, or the Potts problem
with any number of labels), the method experimentally still provides
good local (though not global in general) minima.

We must admit, though, that the practical impact of
Theorem~\ref{th:theorem_optimality} is limited because the presented
dual LP relaxations satisfying its assumptions can be efficiently
solved also by other approaches. Thus, max-flow/min-$st$-cut can be
solved (besides well-known combinatorial algorithms such as
Ford-Fulkerson) by message-passing methods such as TRW-S. Similarly,
the Potts problem with two labels is tractable and can be reduced to
max-flow. In general, all considered LP relaxations can be reduced to
max-flow, as noted in~\S\ref{sec:monotone}. Note, however, that this
does not make our result trivial because (as noted
in~\S\ref{sec:reforms}) equivalent reformulations of problems may not
preserve interior local minima and thus message-passing methods are
not equivalent in any obvious way to our method.

It is open whether there are practically interesting classes of linear
programs that are solved exactly (or at least with constant
approximation ratio) by (block-)coordinate minimization and are not
solvable by known combinatorial algorithms such as max-flow. Another
interesting question is which reformulations in general preserve
interior local minima and which do not.

Our approach can pave the way to new efficient large-scale
optimization methods in the future. Certain features of our results
give us hope here. For instance, our approach has an important novel
feature over message-passing methods: it applies to a {\em
constrained\/} convex problem (the box
constraints~\eqref{eq:primal_simplified:box1}
and~\eqref{eq:primal_simplified:box2}). This can open the way to a new
class of applications. Furthermore, updates along large variable
blocks (which we have not explored) can speed algorithms considerably,
e.g., TRW-S uses updates along subtrees of a graphical model, while
max-sum diffusion uses updates along single variables.
%
%
%
\bibliographystyle{splncs04}
\bibliography{mybibliography}
%


\appendix

\section{Details on Coordinate-wise Updates}\label{ap:details_updates}

We now describe coordinate-wise minimization for
problem~\eqref{eq:primal_simplified}, satisfying the relative interior
rule. Objective function~\eqref{eq:primal_simplified_criterion} restricted to a single variable $\varphi_i$ for chosen $i \in [m]$ reads (up to a constant)
\begin{equation}\label{eq:restriction_to_varphi_c}
\max\{w_i-\varphi_i,0\} + \sum_{\substack{j \in [p]\\A_{ij} \neq 0}}\max\{A_{ij}\varphi_i + \kcivarphi, 0\} + a_i\varphi_i,
\end{equation}
where
\begin{equation}\label{eq:def_k_varphi}
\kcivarphi = v_j + \sum_{\substack{i' \in [m]\\i' \neq i}} A_{i'j}\varphi_{i'} + B_{:j}^T\lambda.
\end{equation}
This is a convex piecewise-affine function of $\varphi_i$. Its breakpoints are $w_i$ and $-\kcivarphi/A_{ij}$ for each $j \in [p], A_{ij} \neq 0$. To find its minimum subject to $\lbvarphi_i \leq \varphi_i \leq \ubvarphi_i$, it is enough to consider the cases listed below.
\begin{enumerate}
\item If function \eqref{eq:restriction_to_varphi_c} is strictly decreasing and $\ubvarphi_i$ is finite, then $\ubvarphi_i$ is the unique minimum.
\item If function \eqref{eq:restriction_to_varphi_c} is strictly increasing and $\lbvarphi_i$ is finite, then $\lbvarphi_i$ is the unique minimum.
\item If function \eqref{eq:restriction_to_varphi_c} has an (possibly unbounded) interval $[b_1, b_2]$, where $b_1 \leq b_2$, as its set of minimizers, then the set of minimizers subject to $\lbvarphi_i \leq \varphi_i \leq \ubvarphi_i$ is the projection of $[b_1, b_2]$ onto $[\lbvarphi_i, \ubvarphi_i]$, i.e. an interval $[h_{[\lbvarphi_i, \ubvarphi_i]}(b_1), h_{[\lbvarphi_i, \ubvarphi_i]}(b_2)]$.
\end{enumerate}

In order to perform an update to the relative interior of optimizers, we can simply set $\varphi_i := \ubvarphi_i$ in the first case, $\varphi_i := \lbvarphi_i$ in the second case. For the third case, the update to the relative interior corresponds to setting $\varphi_i$ to some value from $\text{ri}[h_{[\lbvarphi, \ubvarphi]}(b_1), h_{[\lbvarphi, \ubvarphi]}(b_2)]$. In our implementation, we choose the midpoint of this interval if it is bounded. If it is unbounded in some direction, we choose a value in a fixed distance from its finite bound.

To identify which case occurred, one should analyse the slopes of the function between its breakpoints and the region of optima corresponds to the interval where the function \eqref{eq:restriction_to_varphi_c} is constant. If there is no such interval, then its (unrestricted) minimum is at a breakpoint where the function changes from decreasing to increasing or the function is strictly monotone.

In other cases, function \eqref{eq:restriction_to_varphi_c} is unbounded and therefore also the original problem \eqref{eq:primal_simplified} is unbounded.

Objective function \eqref{eq:primal_simplified_criterion} restricted to a single variable $\lambda_i$ reads (up to a constant)
\begin{equation}\label{eq:restriction_to_lambda_c}
\sum_{\substack{j \in [p]\\B_{ij} \neq 0}}\max\{B_{ij}\lambda_i + \kcilambda, 0\} + b_i\lambda_i,
\end{equation}
where
\begin{equation}\label{eq:def_k_lambda}
\kcilambda = v_j + A_{:j}^T\varphi+ \sum_{\substack{i' \in [n]\\i' \neq i}} B_{i'j}\lambda_{c'}.
\end{equation}
To find the minimum of this function subject to $\lblambda_i \leq \lambda_i \leq \ublambda_i$, one can apply the same procedure as with $\varphi_i$, except that the breakpoints will be only $-\kcilambda/B_{ij}$ for each $j \in [p], B_{ij} \neq 0$.

\section{Proof of Theorem \ref{th:theorem_dual_solution}}\label{ap:proof_theorem}

\begin{observation} \label{ob:x_redefinition_a}
For a given $j \in [p]$, $i \in [m]$ with $A_{ij} \neq 0$, fixed value of $\varphi_i$ and the corresponding breakpoint $b = -{\kcivarphi}/{A_{ij}}$ of the restricted criterion \eqref{eq:restriction_to_varphi_c}, the value $x_j$ determined by \eqref{eq:dual_solution} satisfies
$$x_j = 
  \begin{cases}
    \frac12 (1+A_{ij}) & \text{if } \varphi_i  > b \\
    \frac{1}{2} & \text{if } \varphi_i = b \\
    \frac12(1-A_{ij}) & \text{if } \varphi_i < b
  \end{cases}.$$
\end{observation}

\begin{observation}\label{ob:x_redefinition_b}
For a given $j \in [p]$, $i \in [n]$ with $B_{ij} \neq 0$, fixed value of $\lambda_i$ and the corresponding breakpoint $b = -{\kcilambda}/{B_{ij}}$ of the restricted criterion \eqref{eq:restriction_to_lambda_c}, the value $x_j$ determined by \eqref{eq:dual_solution} satisfies
$$x_j = 
  \begin{cases}
    \frac12(1+B_{ij}) & \text{if } \lambda_i  > b \\
    \frac{1}{2} & \text{if } \lambda_i = b \\
    \frac12(1-B_{ij}) & \text{if } \lambda_i < b
  \end{cases}.$$
\end{observation}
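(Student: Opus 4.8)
The plan is to reduce the statement to a sign analysis of the single quantity
\[
g_j := A_{:j}^T\varphi + B_{:j}^T\lambda + v_j
\]
that governs the definition of $x_j$ in \eqref{eq:dual_solution}, and to re-express it in terms of $\lambda_i$ and the breakpoint $b$. The argument is the exact analogue of the one behind Observation~\ref{ob:x_redefinition_a}, with $\lambda_i$, $B_{ij}$ and $\kcilambda$ playing the roles of $\varphi_i$, $A_{ij}$ and $\kcivarphi$.

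First I would isolate the $i$-th summand of $B_{:j}^T\lambda = \sum_{i'\in[n]} B_{i'j}\lambda_{i'}$ and compare the remaining terms with the definition \eqref{eq:def_k_lambda} of $\kcilambda$. This yields
\[
g_j = v_j + A_{:j}^T\varphi + \sum_{\substack{i'\in[n]\\ i'\neq i}} B_{i'j}\lambda_{i'} + B_{ij}\lambda_i = \kcilambda + B_{ij}\lambda_i .
\]
Next I would substitute the breakpoint $b = -\kcilambda/B_{ij}$, that is $\kcilambda = -B_{ij}\,b$, to obtain the clean form
\[
g_j = B_{ij}\,(\lambda_i - b) .
\]

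The final step uses that, by the assumptions of Theorem~\ref{th:theorem_optimality}, the matrix $B$ has entries only in $\{-1,0,1\}$, so $B_{ij}\neq 0$ forces $B_{ij}\in\{-1,1\}$, and I would split into these two cases. For $B_{ij}=1$ the quantity $g_j=\lambda_i-b$ has the same sign as $\lambda_i-b$, and reading off the three branches of $x_j$ in \eqref{eq:dual_solution} gives $x_j=1=\tfrac12(1+B_{ij})$ for $\lambda_i>b$, $x_j=\tfrac12$ for $\lambda_i=b$, and $x_j=0=\tfrac12(1-B_{ij})$ for $\lambda_i<b$. For $B_{ij}=-1$ we have $g_j=b-\lambda_i$, so the sign is reversed and the ``$>$'' and ``$<$'' branches swap; one checks that the resulting values are again exactly $\tfrac12(1+B_{ij})$ and $\tfrac12(1-B_{ij})$, while the equality case $\lambda_i=b$ is unaffected.

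There is no genuine obstacle in this verification. The only point requiring care is the sign reversal in the case $B_{ij}=-1$: this is precisely the mechanism by which the stated formula alternates between $\tfrac12(1+B_{ij})$ and $\tfrac12(1-B_{ij})$ as the direction of the inequality between $\lambda_i$ and $b$ flips, and keeping track of it consistently is what makes the two factored cases collapse into the single closed-form expression claimed.
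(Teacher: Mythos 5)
Your proof is correct and follows essentially the same route as the paper, which simply notes that the claim ``follows directly from the definitions of $\kcilambda$ in \eqref{eq:def_k_lambda} and of $x$ in \eqref{eq:dual_solution}''; your factorization $A_{:j}^T\varphi + B_{:j}^T\lambda + v_j = \kcilambda + B_{ij}\lambda_i = B_{ij}(\lambda_i - b)$ and the case split on $B_{ij}\in\{-1,1\}$ is exactly the computation the paper leaves implicit. You were also right to invoke the assumption $B_{ij}\in\{-1,0,1\}$ from Theorem~\ref{th:theorem_optimality}: the closed form $\tfrac12(1\pm B_{ij})$ would fail for other nonzero values of $B_{ij}$, and this assumption is available here because the observation lives inside the proof of Theorem~\ref{th:theorem_dual_solution}.
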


\begin{proof}
Both observations follow directly from the definitions of $\kcivarphi$ in \eqref{eq:def_k_varphi} (resp. $\kcilambda$ in \eqref{eq:def_k_lambda}) and the definition of $x$ in \eqref{eq:dual_solution}.
\end{proof}

\begin{observation}
Given an interior local minimum $(\varphi, \lambda)$ of \eqref{eq:primal_simplified}, we can obtain the value of $x_j, j\in [p]$ by finding an $i \in [m]$ (resp. $[n]$) for which the dual constraint \eqref{eq:LP_c7} (resp.  \eqref{eq:LP_c8}) contains $x_j$, i.e. $A_{ij} \neq 0$ (resp. $B_{ij} \neq 0$) and then comparing the position of the breakpoint $-{\kcivarphi}/{A_{ij}}$ with $\varphi_i$ while also considering $\text{sign}(A_{ij})$ (resp. the breakpoint $-{\kcilambda}/{B_{ij}}$ with $\lambda_i$ while also considering $\text{sign}(B_{ij})$).
\end{observation}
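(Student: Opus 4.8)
The plan is to derive the stated observation directly from Observation~\ref{ob:x_redefinition_a} and Observation~\ref{ob:x_redefinition_b}, the only genuinely new content being that the recovered value is independent of the particular row index~$i$ that one selects. First I would record that the value $x_j$ prescribed by \eqref{eq:dual_solution} depends on the interior local minimum $(\varphi,\lambda)$ only through the sign of the single scalar
\[
\sigma_j := A_{:j}^T\varphi + B_{:j}^T\lambda + v_j ,
\]
with $x_j$ equal to $0$, $\tfrac12$, or $1$ according to whether $\sigma_j<0$, $\sigma_j=0$, or $\sigma_j>0$. The crucial point is that $\sigma_j$ is a fixed quantity determined by the whole vector $(\varphi,\lambda)$ and is manifestly independent of any choice of row index.

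Next I would connect $\sigma_j$ to a chosen coordinate. Fix any $i \in [m]$ with $A_{ij}\neq 0$. Substituting the definition \eqref{eq:def_k_varphi} of $\kcivarphi$ and splitting off the $i$-th term gives $\sigma_j = A_{ij}\varphi_i + \kcivarphi$, and since the associated breakpoint of the restricted criterion \eqref{eq:restriction_to_varphi_c} is $b = -\kcivarphi/A_{ij}$, we obtain the clean identity $\sigma_j = A_{ij}(\varphi_i - b)$. Because the hypotheses of Theorem~\ref{th:theorem_optimality} force $A_{ij}\in\{-1,0,1\}$ and we have $A_{ij}\neq 0$, the sign of $\sigma_j$ equals $\text{sign}(A_{ij})\cdot\text{sign}(\varphi_i - b)$; expanding the three sign combinations reproduces exactly the case table of Observation~\ref{ob:x_redefinition_a}. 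The identical computation with \eqref{eq:def_k_lambda} and \eqref{eq:restriction_to_lambda_c} handles any $i \in [n]$ with $B_{ij}\neq 0$ through Observation~\ref{ob:x_redefinition_b}. Thus each admissible choice of $i$ recovers $x_j$ by comparing $\varphi_i$ (resp.\ $\lambda_i$) with the breakpoint and accounting for $\text{sign}(A_{ij})$ (resp.\ $\text{sign}(B_{ij})$), as claimed.

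Finally I would address well-definedness and existence, which are the only points requiring a remark rather than a computation. Well-definedness is immediate: although a column $j$ may carry several non-zero entries across $A$ and $B$ (the sparsity bound of Theorem~\ref{th:theorem_optimality} restricts rows, not columns), every admissible $i$ recovers the same value because each of them merely computes the sign of the one scalar $\sigma_j$, so consistency is automatic and needs no separate argument. For existence, the statement presupposes that $x_j$ occurs in at least one of the constraints \eqref{eq:LP_c7}--\eqref{eq:LP_c8}, i.e.\ that some $A_{ij}$ or $B_{ij}$ is non-zero; in the degenerate case where column $j$ of both $A$ and $B$ vanishes one has $\sigma_j = v_j$ and $x_j$ is read off directly from $\text{sign}(v_j)$ in \eqref{eq:dual_solution}. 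I expect no real obstacle here, since the result is a packaging corollary of the two preceding observations; the only care needed is to make the index-independence explicit and to state the identity $\sigma_j = A_{ij}(\varphi_i - b)$ cleanly.
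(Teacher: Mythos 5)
Your proposal is correct and takes essentially the same route as the paper, whose entire proof is that the statement ``follows from Observations~\ref{ob:x_redefinition_a}, \ref{ob:x_redefinition_b}''. Your explicit identity $\sigma_j = A_{ij}(\varphi_i - b)$, the resulting index-independence of the recovered value, and the degenerate zero-column case merely spell out details that the paper's one-line proof leaves implicit.
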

\begin{proof}
Follows from Observations \ref{ob:x_redefinition_a}, \ref{ob:x_redefinition_b}.
\end{proof}

\begin{observation}\label{ob:varphi_comparison}
The values of the dual variables $s_i,y_i,z_i$ for $i \in [m]$ and the values of $x_j$ for all $j \in [p]$ with $A_{ij} \neq 0$ can be determined by comparing $\varphi_i$ with $\lbvarphi_i,\ubvarphi_i,w_i$, and the corresponding breakpoints of \eqref{eq:restriction_to_varphi_c}.
\end{observation}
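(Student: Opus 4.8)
The plan is to treat this as a direct unpacking of the closed-form definitions in \eqref{eq:dual_solution}, organised so that the dependencies among the quantities $x_j$ (with $A_{ij}\neq 0$), $s_i$, $y_i$ and $z_i$ are resolved in an acyclic order. At a fixed interior local minimum $(\varphi,\lambda)$ every variable other than $\varphi_i$ is frozen, so each breakpoint $-\kcivarphi/A_{ij}$ of the restricted function \eqref{eq:restriction_to_varphi_c} is a fixed real number; the claim is then that knowing only the sign of $\varphi_i$ relative to the finite threshold set $\{\lbvarphi_i,\ubvarphi_i,w_i\}\cup\{-\kcivarphi/A_{ij}\mid A_{ij}\neq 0\}$ pins down all four families of dual variables.

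First I would handle the $x_j$ with $A_{ij}\neq 0$. By Observation \ref{ob:x_redefinition_a}, each such $x_j$ equals $\tfrac12(1+A_{ij})$, $\tfrac12$, or $\tfrac12(1-A_{ij})$ according as $\varphi_i$ is greater than, equal to, or less than the breakpoint $-\kcivarphi/A_{ij}$. Since $A_{ij}$ is a fixed entry of the data and the breakpoint is fixed, $x_j$ is a function of the comparison of $\varphi_i$ with that breakpoint alone, and these breakpoints are exactly the breakpoints of \eqref{eq:restriction_to_varphi_c} other than $w_i$. Crucially, this step uses none of $s_i,y_i,z_i$.

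Next I would determine $s_i$, and then $y_i,z_i$. From \eqref{eq:dual_solution}, $s_i=1$ when $w_i>\varphi_i$ and $s_i=0$ when $w_i<\varphi_i$, so the comparison of $\varphi_i$ with $w_i$ selects the branch; in the tie $w_i=\varphi_i$ we have $s_i=h_{[0,1]}(a_i+A_{i:}^Tx)$, whose only non-constant ingredient is $A_{i:}^Tx=\sum_{j:\,A_{ij}\neq 0}A_{ij}x_j$, already fixed by the previous step. Hence $s_i$ is determined. In the same way $z_i$ vanishes unless $\varphi_i=\ubvarphi_i$ and $y_i$ vanishes unless $\varphi_i=\lbvarphi_i$, conditions decided by comparing $\varphi_i$ with $\ubvarphi_i$ and $\lbvarphi_i$ respectively; in the corresponding tie cases $z_i=h_{\mathbb{R}^-_0}(a_i+A_{i:}^Tx-s_i)$ and $y_i=h_{\mathbb{R}^+_0}(a_i+A_{i:}^Tx-s_i)$ depend only on the already-determined $A_{i:}^Tx$ and $s_i$ together with the constant $a_i$. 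Collecting the three branching conditions together with the breakpoint comparisons yields exactly the asserted threshold set.

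The statement is almost entirely bookkeeping, so the only real point to guard against is circularity in the definitions: $x_j$ feeds into $s_i$, and both $x_j$ and $s_i$ feed into $y_i$ and $z_i$. The main step is therefore to verify that the induced dependency graph is acyclic, with the order $x_j\to s_i\to\{y_i,z_i\}$ and with $y_i,z_i$ independent of each other. Because $x_j$ depends on no dual variable and $s_i$ depends only on the $x_j$, this order is well-founded, which is what makes the successive determination legitimate rather than merely formal.
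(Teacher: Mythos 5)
Your proposal is correct and takes essentially the same route as the paper, whose proof simply states that the claim follows from the preceding observations (in particular Observation~\ref{ob:x_redefinition_a}) and the definition \eqref{eq:dual_solution}. Your explicit check that the dependencies resolve acyclically in the order $x_j \to s_i \to \{y_i,z_i\}$ is precisely the bookkeeping the paper leaves implicit, and it is carried out correctly.
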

\begin{proof}
Follows from the previous observations and the definition of the dual solution \eqref{eq:dual_solution}.
\end{proof}

\begin{observation}\label{ob:lambda_comparison}
The values of the dual variables $r_i,q_i$ for $i \in [n]$ and the values of $x_j$ for all $j \in [p]$ with $B_{ij} \neq 0$ can be determined by comparing $\lambda_i$ with $\lblambda_i,\ublambda_i$, and the corresponding breakpoints of \eqref{eq:restriction_to_lambda_c}.
\end{observation}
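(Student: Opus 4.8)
The plan is to mirror the argument used for $\varphi$ in Observation~\ref{ob:varphi_comparison}, now reading the relevant quantities off the restricted criterion~\eqref{eq:restriction_to_lambda_c} for $\lambda_i$. First I would dispose of the values $x_j$ with $B_{ij}\neq 0$: these are supplied directly by Observation~\ref{ob:x_redefinition_b}, which states that for each such $j$ the value $x_j$ equals $\frac12(1+B_{ij})$, $\frac12$, or $\frac12(1-B_{ij})$ according to whether $\lambda_i$ exceeds, equals, or falls below the breakpoint $b=-\kcilambda/B_{ij}$ of~\eqref{eq:restriction_to_lambda_c}. Hence each such $x_j$ is obtained purely by comparing $\lambda_i$ to a breakpoint and inspecting $\mathrm{sign}(B_{ij})$, which is exactly what the statement asserts for this part.

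Next I would read off $r_i$ and $q_i$ directly from their definitions in~\eqref{eq:dual_solution}. The comparison of $\lambda_i$ with $\ublambda_i$ selects between $r_i=0$ (when $\lambda_i<\ublambda_i$) and $r_i=h_{\mathbb{R}^-_0}(b_i+B_{i:}^Tx)$ (when $\lambda_i=\ublambda_i$); symmetrically, the comparison of $\lambda_i$ with $\lblambda_i$ selects between $q_i=0$ and $q_i=h_{\mathbb{R}^+_0}(b_i+B_{i:}^Tx)$. In the two boundary cases the only dependence on the primal variable $x$ enters through the row product $B_{i:}^Tx=\sum_{j:\,B_{ij}\neq 0}B_{ij}x_j$, so only the entries $x_j$ with $B_{ij}\neq 0$ matter; but these were already expressed in the previous step as functions of the comparison between $\lambda_i$ and the breakpoints of~\eqref{eq:restriction_to_lambda_c}. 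Substituting them shows that $r_i$ and $q_i$ are determined by comparing $\lambda_i$ with $\lblambda_i$, $\ublambda_i$, and those breakpoints, as claimed.

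The only point requiring a moment's care — and the closest thing to an obstacle in this otherwise purely bookkeeping argument — is \emph{self-consistency}: one must verify that evaluating $r_i$ and $q_i$ does not secretly reintroduce a dependence on columns $x_j$ with $B_{ij}=0$, which Observation~\ref{ob:x_redefinition_b} does not govern via $\lambda_i$. This is immediate, since the inner product $B_{i:}^Tx$ annihilates precisely those columns, so the sum ranges only over $j$ with $B_{ij}\neq 0$, i.e.\ exactly the indices covered by Observation~\ref{ob:x_redefinition_b}. Consequently no information beyond the comparisons of $\lambda_i$ with $\lblambda_i$, $\ublambda_i$ and the breakpoints of~\eqref{eq:restriction_to_lambda_c} is needed, and the observation follows from the previous observations together with the definition~\eqref{eq:dual_solution}.
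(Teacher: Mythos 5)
Your proposal is correct and follows essentially the same route as the paper, whose proof is simply the one-line remark that the observation follows from the previous observations (in particular Observation~\ref{ob:x_redefinition_b}) and the definition~\eqref{eq:dual_solution}. Your expansion --- reading $x_j$ for $B_{ij}\neq 0$ off the breakpoint comparisons, then substituting into the case definitions of $r_i$ and $q_i$, and noting that $B_{i:}^Tx$ annihilates the columns with $B_{ij}=0$ --- is exactly the bookkeeping the paper leaves implicit.
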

\begin{proof}
Follows from the previous observations and the definition of the dual solution \eqref{eq:dual_solution}.
\end{proof}

\begin{consequence}\label{co:independence}
The validity of the dual constraints \eqref{eq:LP_c7} (resp.  \eqref{eq:LP_c8}) can be checked independently based on an interior local optimum $(\varphi,\lambda)$ by analyzing the individual restrictions \eqref{eq:restriction_to_varphi_c} (resp. \eqref{eq:restriction_to_lambda_c}) of the criterion function \eqref{eq:primal_simplified_criterion}.
\end{consequence}

For the purposes of the following lemma, we define $M(X,f)$ to be the set of minimizers of a convex function $f$ on a closed convex set $X$.

\begin{lemma}\label{lm:lemma_breakpoints}
For a piecewise-affine function
\begin{equation}
f(x) = \sum_{i = 1}^n \max\{c_ix + d_i,0\} + kx,
\end{equation}
with $c_i \in \{-1,1\}$, if the set $M([l,u],f)$ (for $l \leq u$) has more than one element, then $\{b_1, ..., b_n, l,u\} \cap \mathrm{ri}M([l,u],f) = \emptyset$, where $b_i = -{d_i}/{c_i}$ are the breakpoints of the function.
\end{lemma}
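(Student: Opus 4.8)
The plan is to exploit the convexity of $f$ together with the fact that its slope can only jump \emph{upward} as $x$ increases past a breakpoint. First I would record the basic structure: since $f$ is a finite sum of convex piecewise-affine functions plus the linear term $kx$, it is itself convex and piecewise-affine on all of $\mathbb{R}$, so its set of minimizers on the closed interval $[l,u]$ is again a closed subinterval, $M([l,u],f) = [p,q]$ with $l \le p \le q \le u$. The hypothesis that $M([l,u],f)$ has more than one element means $p < q$, hence $\mathrm{ri}\,M([l,u],f) = (p,q)$ and $f$ is constant, equal to its minimum value, on all of $[p,q]$.

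The endpoints are then immediate: because $M([l,u],f) \subseteq [l,u]$ we have $l \le p$ and $q \le u$, so neither $l$ nor $u$ can lie strictly between $p$ and $q$; that is, $l, u \notin (p,q)$.

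For the breakpoints I would use the one-sided derivatives. Each summand $\max\{c_i x + d_i, 0\}$ is convex, with slope $0$ on one side of $b_i = -d_i/c_i$ and slope $c_i$ on the other, so its right derivative exceeds its left derivative by exactly $|c_i| = 1$ at $b_i$ and the two agree everywhere else. Summing the contributions gives $f'_+(x) - f'_-(x) = \sum_{i:\, b_i = x} 1 \ge 0$, with strict inequality precisely when $x$ is a breakpoint. Now suppose, for contradiction, that some breakpoint $b_i \in (p,q)$. Since $f$ is constant on the open interval $(p,q) \ni b_i$, both one-sided derivatives of $f$ at $b_i$ vanish, so $f'_+(b_i) - f'_-(b_i) = 0$; but the identity above forces this difference to be at least $1$, a contradiction. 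Hence no breakpoint lies in $(p,q)$, and combining with the endpoint case yields $\{b_1,\dots,b_n,l,u\} \cap \mathrm{ri}\,M([l,u],f) = \emptyset$.

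I expect the only real subtlety to be bookkeeping rather than anything conceptual: handling coincident breakpoints, where several $b_i$ share one value and the slope jump becomes a larger positive integer, and confirming that the minimizer set really is a genuine (possibly unbounded) closed interval so that its relative interior is the corresponding open interval. Neither point endangers the argument, since all I use about the jump is that it is strictly positive at any breakpoint, whereas the flat region of minimizers forces a zero jump there.
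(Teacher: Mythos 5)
Your proof is correct and takes essentially the same approach as the paper: both argue by contradiction that a breakpoint inside the flat region of minimizers would force a strictly positive slope increase there (equal to the number of coincident breakpoints, using $|c_i|=1$), contradicting constancy. The only difference is cosmetic --- you phrase the jump via one-sided derivatives, while the paper compares constant slopes on explicit $\epsilon$-neighborhoods to the left and right of the breakpoint.
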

\begin{proof}
Since we assume that $M([l,u],f)$ has more than one element and the function $f$ is convex, it must hold for some $l' < u'$ that $M([l,u],f) = [l',u']$ and the function $f$ is constant on $[l',u']$. Naturally, $l \leq l', u' \leq u$ and therefore $l,u \notin \text{ri}[l',u']$.

We will now show that $b_i \notin \text{ri}[l',u']$ by contradiction, so assume the converse, i.e. for some $i$, $b_i \in \text{ri}[l',u']$, in other words
$l' < b_i < u'$. Define
\begin{subequations}
\begin{align}
S & = \{j \in \{1,..., n\} \mid b_j = b_i\} \\
M & = \{b_j \mid j \in \{1,...,n\}-S\} \cup \{l',u'\} \\
\epsilon & = \min\{|b_i-y| \mid y \in M\},
\end{align}
\end{subequations}
where $\epsilon > 0$. We are going to analyze the slope (i.e. the value of derivative) of $f$ on intervals $(b_i-\epsilon,b_i)$ and $(b_i,b_i+\epsilon)$. By definition of $\epsilon$, there is no breakpoint on these intervals and therefore the function is differentiable there and its slope is constant. Additionally, because it is the region of optima, the slope should be zero on these intervals. So assume that the slope on the interval $(b_i-\epsilon,b_i)$ is zero. Then, the slope on $(b_i,b_i+\epsilon)$ is equal to
\begin{equation}
\sum_{\substack{j \in S\\c_j > 0}} c_j + \sum_{\substack{j \in S\\c_j < 0}} -c_j = |S| > 0,
\end{equation}
because all the functions $\max\{c_jx+d_j,0\}$ for $j \in S$ changed their slope at $b_i$, which is a contradiction.
\end{proof}
\begin{consequence}
Apply Lemma \ref{lm:lemma_breakpoints} on the restrictions \eqref{eq:restriction_to_varphi_c},\eqref{eq:restriction_to_lambda_c}. If we are given an interior local minimum $(\varphi, \lambda)$ of \eqref{eq:primal_simplified}, then the equations \eqref{eq:dual_solution} define the dual variables uniquely in the sense that if some primal variable has a non-unique minimizer (i.e. the relative interior of optimizers of the restricted criterion subject to the box constraints has more than one element), then the choice of this minimizer does not influence the position of the variable with respect to the corresponding breakpoints and interval boundaries, as mentioned in Observation \ref{ob:varphi_comparison} and \ref{ob:lambda_comparison}.
\end{consequence}

\begin{consequence}\label{co:consequence_independence} To show that the dual constraints \eqref{eq:LP_c7} (resp. \eqref{eq:LP_c8}) hold for dual variables defined by a primal interior local optimum, it is only necessary to analyze them independently\footnote{Follows from Observation \ref{ob:varphi_comparison} and \ref{ob:lambda_comparison}, respectively Consequence \ref{co:independence}.}. We need to show that for any possible ordering of values $\lbvarphi_i,\ubvarphi_i,w_i$ (resp. $\lblambda_i,\ublambda_i$) and the corresponding breakpoints of \eqref{eq:restriction_to_varphi_c} (resp. \eqref{eq:restriction_to_lambda_c}), any values $a_{i}, A_{i:}$ (resp. $b_i, B_{i:}$) satisfying the conditions in Theorem \ref{th:theorem_optimality}, the values of the corresponding dual variables $s_i,y_i,z_i,x_j$ (resp. $r_i,q_i,x_j$) satisfy the dual constraint \eqref{eq:LP_c7} (resp. \eqref{eq:LP_c8}) if $\varphi_i$ (resp. $\lambda_i$) is any\footnote{Follows from Lemma \ref{lm:lemma_breakpoints}.} chosen value in the relative interior of optimizers of \eqref{eq:restriction_to_varphi_c} (resp. \eqref{eq:restriction_to_lambda_c}) subject to the box constraints.
\end{consequence}

We will now prove that the dual constraint \eqref{eq:LP_c7} is satisfied for each $i \in [m]$ separately and similarly with \eqref{eq:LP_c8}. We will distinguish the cases based on the constraint and the value of $a_c$, resp. $b_i$.

\begin{fact}
For $A$ (resp. $B$) satisfying the conditions of Theorem \ref{th:theorem_optimality} and any $x \in~[0,1]^{p}$, the value of $A_{i:}^Tx$ (resp. $B_{i:}^Tx$) is in range $[-2,2]$.
\end{fact}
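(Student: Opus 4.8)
The plan is to bound the scalar $A_{i:}^T x = \sum_{j=1}^p A_{ij} x_j$ directly, term by term. By the hypotheses of Theorem~\ref{th:theorem_optimality}, every entry $A_{ij}$ lies in $\{-1,0,1\}$ and the row $A_{i:}$ has at most two nonzero entries. Thus the sum collapses to at most two nonzero summands, and it suffices to bound each of these.

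For each $j$ with $A_{ij} \neq 0$ we have $A_{ij} \in \{-1,1\}$ and $x_j \in [0,1]$, so the product satisfies $-1 \le A_{ij} x_j \le 1$: indeed $A_{ij} x_j \in [0,1]$ when $A_{ij} = 1$ and $A_{ij} x_j \in [-1,0]$ when $A_{ij} = -1$. Adding at most two such terms, each confined to $[-1,1]$, yields $A_{i:}^T x \in [-2,2]$. The argument for $B_{i:}^T x$ is word-for-word identical, using that $B$ likewise has entries in $\{-1,0,1\}$ with at most two nonzeros per row.

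There is essentially no genuine obstacle here: the claim is an immediate consequence of the ``at most two nonzero $\pm 1$ entries per row'' assumption. The only point requiring any care is to invoke the sparsity of the row correctly, so that the sum really has at most two active terms that one bounds, rather than attempting to bound a full-length row of $p$ coordinates (which would give the useless range $[-p,p]$). Once the counting of nonzero entries is made explicit, the interval $[-2,2]$ follows by the triangle inequality applied to the two surviving summands.
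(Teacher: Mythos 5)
Your proof is correct and is exactly the argument the paper has in mind: the paper states this as a \emph{Fact} without any written proof, treating it as immediate from the ``at most two nonzero $\pm 1$ entries per row'' assumption combined with $x \in [0,1]^p$. Your term-by-term bounding of the (at most two) nonzero summands is the only natural route, so there is nothing to compare beyond noting that you made explicit what the paper left implicit.
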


\subsection{Proofs for $a_i,b_i$ with large absolute values}

\paragraph{Proof for \eqref{eq:LP_c7} with $a_i > 3$.} If $a_i > 3$, then the function \eqref{eq:restriction_to_varphi_c} is strictly increasing and the bound $\lbvarphi_i$ is necessarily finite, because otherwise the criterion would be unbounded. In this case, the optimum is at $\varphi_i = \lbvarphi_i$. Due to $A_{i:}^Tx  \in [-2,2]$ and $s_i \in [0,1]$, it holds that $y_i = a_i + A_{i:}^Tx - s_i > 0$ and $z_i = 0$. Therefore, the dual constraint \eqref{eq:LP_c7} is satisfied.

\paragraph{Proof for \eqref{eq:LP_c7} with $a_i < -2$.} If $a_i < -2$, then the function \eqref{eq:restriction_to_varphi_c} is strictly decreasing and the bound $\ubvarphi_i$ is necessarily finite, because otherwise the criterion would be unbounded. In this case, the optimum is at $\varphi_i = \ubvarphi_i$. Due to $A_{i:}^Tx  \in [-2,2]$ and $s_i \in [0,1]$, it holds that $a_i + A_{i:}^Tx - s_i < 0$, so $y_i = 0$ and $z_i = a_i + A_{i:}^Tx - s_i - y_i < 0$. Therefore, the dual constraint \eqref{eq:LP_c7} is satisfied.

\paragraph{Proof for \eqref{eq:LP_c8} with $b_i > 2$.} If $b_i > 2$, then the function \eqref{eq:restriction_to_lambda_c} is strictly increasing and the bound $\lblambda_i$ is necessarily finite, because otherwise the criterion would be unbounded. In this case, the optimum is at $\lambda_i = \lblambda_i$. Due to $B_{i:}^Tx  \in [-2,2]$, it holds that $q_i = b_i + B_{i:}^Tx > 0$ and $r_i = 0$. Therefore, the dual constraint \eqref{eq:LP_c8} is satisfied.

\paragraph{Proof for \eqref{eq:LP_c8} with $b_i < -2$.} If $b_i < -2$, then the function \eqref{eq:restriction_to_lambda_c} is strictly decreasing and the bound $\ublambda_i$ is necessarily finite, because otherwise the criterion would be unbounded. In this case, the optimum is at $\lambda_i = \ublambda_i$. Due to $B_{i:}^Tx  \in [-2,2]$, it holds that $r_i = b_i + B_{i:}^Tx < 0$ and $q_i = 0$. Therefore, the dual constraint \eqref{eq:LP_c8} is satisfied.

\subsection{Proof for the remaining values}

We will now focus on satisfaction of a single dual constraint \eqref{eq:LP_c7} with $a_i \in [-2,3] \cap \mathbb{Z}$ as given by Consequence \ref{co:consequence_independence} (the analysis for \eqref{eq:LP_c8} with $b_i \in [-2,2] \cap \mathbb{Z}$ would be similar). By the conditions of Theorem \ref{th:theorem_optimality}, we can assume that $A_{i:}$ has at most two non-zero entries, which will be WLOG on positions $A_{i1},A_{i2}$. It now only remains to show that for any values $A_{i1}, A_{i2}, a_i$, and any ordering\footnote{By ordering, we mean comparisons by strict inequality and equality, not only w.r.t $\leq$.} of values $\lbvarphi_i, \ubvarphi_i, w_i$, and the corresponding breakpoints of \eqref{eq:restriction_to_varphi_c}, a value of $\varphi_i$ in the relative interior of optimizers of \eqref{eq:restriction_to_varphi_c} subject to $\lbvarphi_i \leq \varphi_i \leq \ubvarphi_i$ will define an assignment \eqref{eq:dual_solution} which is feasible for \eqref{eq:LP_c7}.

If $A_{i1},A_{i2} \neq 0$, then there are two breakpoints $b_1,b_2$ (if some of the coefficients was zero, then the corresponding breakpoint would not exist, but otherwise the analysis would be analogous). There is only a finite amount of possible orderings of values $\lbvarphi_i,\ubvarphi_i,w_i,b_1,b_2$, because there is at most $5!$ orderings w.r.t. $\leq$, i.e.
\begin{subequations}
\begin{align}
\lbvarphi_i \leq \ubvarphi_i \leq &w_i \leq b_1 \leq b_2 \label{eq:ordering1}\\
\lbvarphi_i \leq \ubvarphi_i \leq &w_i \leq b_2 \leq b_1\\
& \vdots \nonumber \\
b_2 \leq b_1 \leq &w_i \leq \ubvarphi_i \leq \lbvarphi_i,
\end{align}
\end{subequations}
some of which are not allowed due to $\lbvarphi_i < \ubvarphi_i$. For each of them, there is $2^4$ options to diversify $=$ and $<$, for example for ordering \eqref{eq:ordering1}:
\begin{subequations}
\begin{align}
\lbvarphi_i < \ubvarphi_i < &w_i < b_1 < b_2\\
\lbvarphi_i < \ubvarphi_i < &w_i < b_1 = b_2\\
\lbvarphi_i < \ubvarphi_i < &w_i = b_1 < b_2\\
\lbvarphi_i < \ubvarphi_i < &w_i = b_1 = b_2\\
& \vdots \nonumber
\end{align}
\end{subequations}
Therefore, we can perform case analysis based on the given ordering and values $A_{i1},A_{i2} \in \{-1,0,1\}, a_i \in \{-2,-1,0,1,2,3\}$. For each of the (at most) $3^2 \cdot 6 \cdot 2^4 \cdot 5!$ cases, we will decide the conditions on the position of $\varphi_i$ and based on the position, set the values $s_i,z_i,y_i,x_1,x_2$ and check that the dual constraint \eqref{eq:LP_c7} is satisfied.

Due to large amount of considered cases, the proof is automated and attached in the supplementary material (Matlab script $\texttt{automated\_proof.m}$). In the automated proof, we use the fact that for each of the $2^4 \cdot 5!$ orderings, there exists an assignment to values $\lbvarphi_i,\ubvarphi_i,w_i,b_1,b_2$ from the set $\{1, ..., 5\}$ such that the ordering (including $<$ and $=$ relations) is preserved. For each such assignment and each value of $A_{i1},A_{i2},a_i$, we find the region of optima, assign a value to $\varphi_i$ such that it is in the relative interior of the region of optima, calculate the values of the dual variables and check their validity.

A similar procedure is performed in order to prove satisfaction of the dual constraint \eqref{eq:LP_c8}. This proves the feasibility of the solution \eqref{eq:dual_solution}, complementary slackness can be trivially seen from the definition of the solution and comparing the corresponding constraints/variables in \eqref{eq:LP}.

\section{Example for Weighted Vertex Cover}\label{ap:example_weighted_vertex_cover}

Consider the graph $K_{1,n}$, i.e. a complete bipartite graph with one node $x_1$ on one side with weight equal to $n-\frac12$ and $n$ nodes $x_2, ..., x_{n+1}$ on the other side, which have weights equal to $1$. Then, $x = (0,1,1,1,...,1)$ is an interior local minimum of \eqref{eq:vertex_cover} with respect to all blocks of variables of size $n$. Let us now consider such block $B$.
\begin{itemize}
\setlength\itemsep{0em}
\item If the variable $x_1$ is not in block $B$, no update is possible due to the constraints on edges.
\item If the variable $x_1$ is in the block, then we could update all $x_i$ for $i \in B, i \neq 1$ to $x_i - \epsilon$ and $x_1$ to $x_1 + \epsilon$ for $\epsilon \in [0,1]$ and the values of variables will remain feasible\footnote{We could also increase $x_1$ to $x_1 + \epsilon + \delta$ for $\delta \in [0,\epsilon-1]$, but such update would never be optimal for any $\delta > 0$.}. This update would change the criterion by $\epsilon (n - \frac12)- \epsilon(n-1) = \frac12 \epsilon \geq 0$ and since we are minimizing, the optimum update is for $\epsilon = 0$, i.e. remain with the current value of $x$.
\end{itemize}
Thus, the shown point is an interior local minimum w.r.t. all blocks of variables of size $n = |V|-1$, but has worse criterion value than the global optimizer $x = (1, 0, 0, ..., 0)$.

\section{Example for the Potts Problem} \label{ap:example_potts}

Consider the case with $|K| = 3$ labels and a chain graph with 4 nodes. We can see the numerical example in Figure \ref{fig:potts}, all the active (maximal) labels in nodes are shown as black and the inactive as white and their transformed values are shown under them (the first number in the formula is their original value). The values of the $\lambda$ variables are on the corresponding edges and the orientation of the edges is from left to right. One can clearly see that each of the $\lambda$ variables is in its relative interior of optima.

If we transform the $\lambda$ variables into the general form of MAP estimation \eqref{eq:general_MAP} with $\delta$ variables, then the result is in Figure \ref{fig:diffusion}. Clearly, the unary potentials $\theta^\delta_i(k)$ did not change by the transformation. The binary potentials $\theta^\delta_{ij}(k,l)$ have values $-3,-2,-1,0$, depending on the corresponding $\delta_{ij}(k)$ and $\delta_{ji}(l)$. One can observe that this setting of variables is not an interior local minimum, because by reasoning from [29], the arc consistency closure of the maximal nodes and maximal edges in Figure \ref{fig:diffusion} is empty and therefore we can decrease the criterion by Max-Sum diffusion (i.e. by coordinate-wise updates into the relative interior).

At the initial stage, $\delta_{12}(1)$ is not in the relative interior of optimizers, but on its boundary, we will update it. For the same reason, we will then sequentially update $\delta_{21}(1)$, $\delta_{23}(1)$, $\delta_{32}(1)$ and $\delta_{34}(1)$. Then, $\delta_{43}(1)$ would not be the optimal choice and its change would decrease the criterion.

\begin{figure}
   \centering
   \includegraphics[width=\columnwidth,trim={2cm 3cm 2cm 3cm},clip]{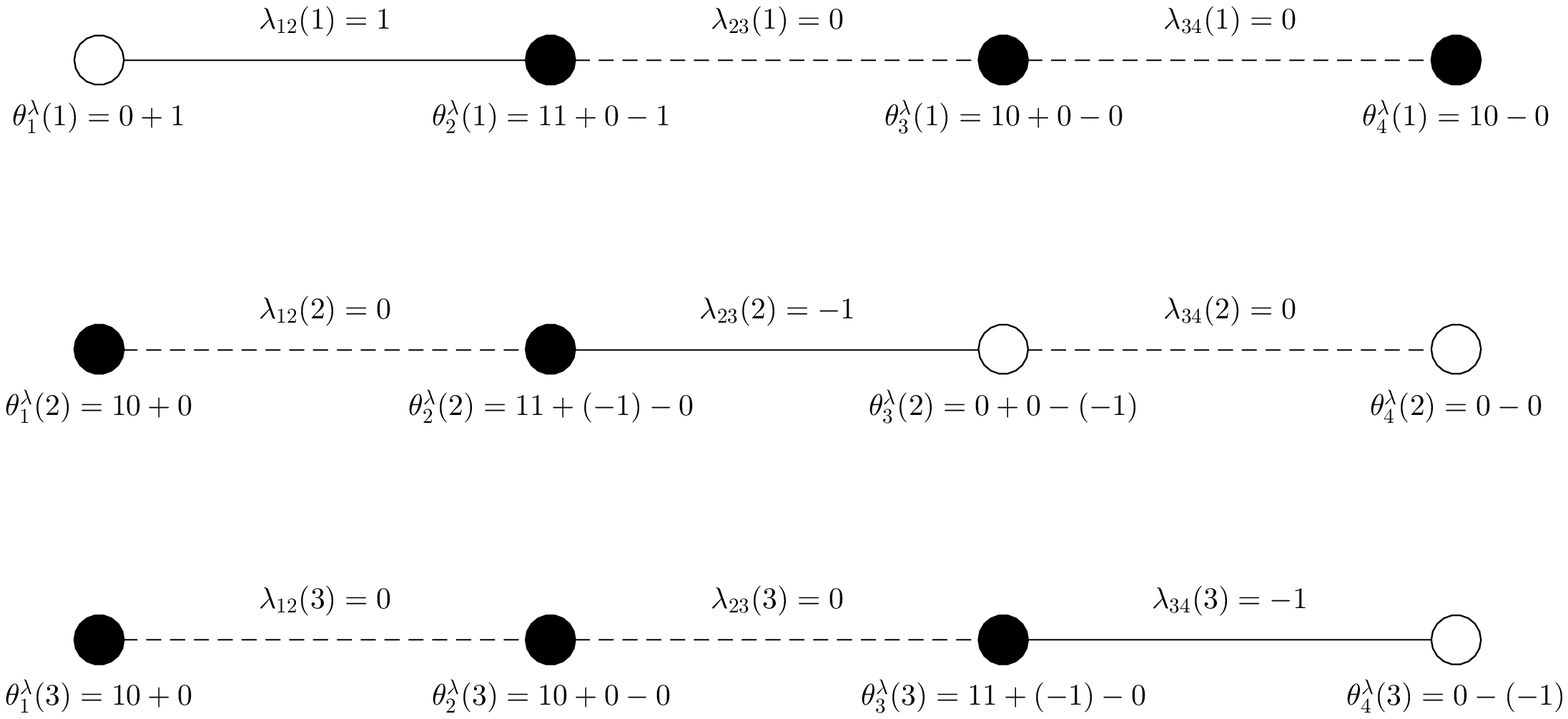}
   \caption{An example of an interior local minimum of the Potts problem,  non-zero $\lambda$ variables are denoted by solid lines and zero $\lambda$ variables are denoted by dashed lines. All values should be halved, which was omitted for clarity and better reading.}
   \label{fig:potts}

\bigskip
   \includegraphics[width=\columnwidth,trim={2cm 3cm 2cm 3cm},clip]{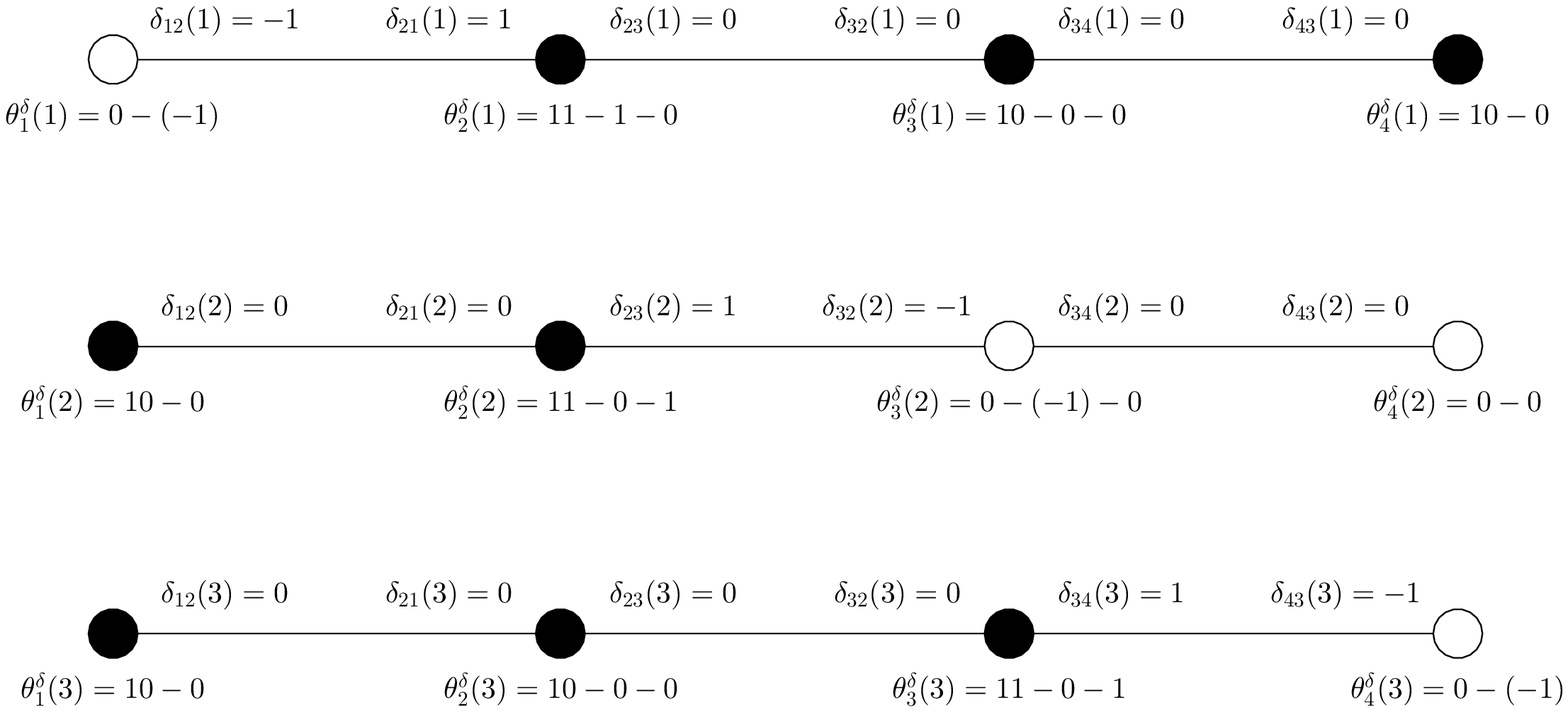}
   \caption{Corresponding problem in variables of Max-Sum diffusion, maximal values of $\theta^\delta_{ij}(k,l)$ are shown by solid lines in each edge, non-maximal values $\theta^\delta_{ij}(k,l)$ are not drawn. All values should be halved, which was omitted for clarity and better reading.}
   \label{fig:diffusion}
\end{figure}

\end{document}